\documentclass[final,5p,times]{elsarticle}
\usepackage{amsmath,amssymb,amsfonts}
\usepackage{amsthm}
\usepackage{mathtools}

\usepackage{graphicx}
\graphicspath{{Figures/}}

\usepackage{booktabs}
\usepackage{multirow}
\usepackage{float}
\usepackage{subfig}

\usepackage{algorithm}
\usepackage{algpseudocode}

\usepackage{tikz}
\usetikzlibrary{
calc,
arrows,
automata,
backgrounds,
petri,
decorations.pathmorphing,
shapes,
shapes.geometric,
positioning
}

\usepackage[nameinlink]{cleveref}
\usepackage{placeins}

\newtheorem{theorem}{Theorem}[section]
\newtheorem{proposition}[theorem]{Proposition}
\newtheorem{lemma}[theorem]{Lemma}
\newtheorem{corollary}[theorem]{Corollary}
\newtheorem{definition}[theorem]{Definition}
\newtheorem{remark}[theorem]{Remark}
\newtheorem{assumption}[theorem]{Assumption}

\journal{Aerospace Science and Technology}

\begin{document}

%%%%%%%%%%%%%%%%%%%%%%%%%%%%%%%%%%%%%%%%%%%%%%%%%%%%%%%%%%%%%%%%%%%%%%%%%%%%%%
%% Front Matter
%%%%%%%%%%%%%%%%%%%%%%%%%%%%%%%%%%%%%%%%%%%%%%%%%%%%%%%%%%%%%%%%%%%%%%%%%%%%%%

\begin{frontmatter}

%%%%%%%%%%%%%%%%%%%%%%%%%%%%%%%%%%%%%%%%%%%%%%%%%%%%%%%%%%%%%%%%%%%%%%%%%%%%%%
%% Title
%%%%%%%%%%%%%%%%%%%%%%%%%%%%%%%%%%%%%%%%%%%%%%%%%%%%%%%%%%%%%%%%%%%%%%%%%%%%%%

\title{
Transient Stability of Nonlinear Stochastic Dynamics:
A Finite-Amplitude Logarithmic Measure Framework
}

%%%%%%%%%%%%%%%%%%%%%%%%%%%%%%%%%%%%%%%%%%%%%%%%%%%%%%%%%%%%%%%%%%%%%%%%%%%%%%
%% Authors
%%%%%%%%%%%%%%%%%%%%%%%%%%%%%%%%%%%%%%%%%%%%%%%%%%%%%%%%%%%%%%%%%%%%%%%%%%%%%%

\author[ursc]{Surya Ratna Prakash D}
\ead{dsrp@ursc.gov.in}

\author[iisc]{Soumyendu Raha\corref{cor1}}
\ead{raha@iisc.ac.in}

\cortext[cor1]{Corresponding author.}

%%%%%%%%%%%%%%%%%%%%%%%%%%%%%%%%%%%%%%%%%%%%%%%%%%%%%%%%%%%%%%%%%%%%%%%%%%%%%%
%% Affiliations
%%%%%%%%%%%%%%%%%%%%%%%%%%%%%%%%%%%%%%%%%%%%%%%%%%%%%%%%%%%%%%%%%%%%%%%%%%%%%%

\affiliation[ursc]{
organization={U R Rao Satellite Centre, Indian Space Research Organisation (ISRO)},
city={Bengaluru},
postcode={560017},
country={India}
}

\affiliation[iisc]{
organization={Centre for Computational and Data Sciences, Indian Institute of Science},
city={Bengaluru},
postcode={560012},
country={India}
}

%%%%%%%%%%%%%%%%%%%%%%%%%%%%%%%%%%%%%%%%%%%%%%%%%%%%%%%%%%%%%%%%%%%%%%%%%%%%%%
%% Abstract
%%%%%%%%%%%%%%%%%%%%%%%%%%%%%%%%%%%%%%%%%%%%%%%%%%%%%%%%%%%%%%%%%%%%%%%%%%%%%%

\begin{abstract}

Spacecraft guidance, navigation, and control systems operating during mission-critical phases such as atmospheric entry, powered descent, and planetary landing require reliable assessment of finite-time transient behavior under stochastic uncertainty. Existing stochastic stability frameworks primarily characterize asymptotic or infinitesimal behavior and therefore provide limited insight into finite-amplitude transient amplification over operationally relevant time horizons.

This paper develops a finite-amplitude logarithmic measure for nonlinear It\^o stochastic differential equations, extending classical matrix measures from infinitesimal to finite-amplitude perturbation evolution. The proposed framework establishes finite-time mean and variance bounds for logarithmic perturbation growth, derives Chernoff-type probabilistic bounds on transient amplification, and shows that mean transient stability does not necessarily guarantee finite-time pathwise safety. The theory is further extended to projected stochastic dynamics, leading to a transient-risk index that quantitatively balances deterministic contraction and diffusion-induced variability for transient-risk-aware system design.

The proposed framework is validated through Monte Carlo simulations and flight-like lunar descent telemetry. The results demonstrate substantially improved prediction of nonlinear transient growth compared with classical Jacobian-based approaches and successfully distinguish trajectories exhibiting similar nominal behavior but significantly different transient-risk characteristics. The proposed framework provides a unified methodology for finite-time transient stability analysis, probabilistic transient-risk assessment, and transient-risk-aware guidance, navigation, and control of nonlinear stochastic aerospace systems.

\end{abstract}

%%%%%%%%%%%%%%%%%%%%%%%%%%%%%%%%%%%%%%%%%%%%%%%%%%%%%%%%%%%%%%%%%%%%%%%%%%%%%%
%% Highlights
%%%%%%%%%%%%%%%%%%%%%%%%%%%%%%%%%%%%%%%%%%%%%%%%%%%%%%%%%%%%%%%%%%%%%%%%%%%%%%

\begin{highlights}

\item Introduces a finite-amplitude logarithmic measure for nonlinear It\^o stochastic systems.

\item Develops deterministic and probabilistic finite-time transient stability theory.

\item Extends transient stability analysis to projected stochastic dynamics.

\item Proposes a transient-risk index for transient-risk-aware system design.

\item Validates the framework using Monte Carlo simulations and flight-like lunar descent telemetry.

\end{highlights}

%%%%%%%%%%%%%%%%%%%%%%%%%%%%%%%%%%%%%%%%%%%%%%%%%%%%%%%%%%%%%%%%%%%%%%%%%%%%%%
%% Keywords
%%%%%%%%%%%%%%%%%%%%%%%%%%%%%%%%%%%%%%%%%%%%%%%%%%%%%%%%%%%%%%%%%%%%%%%%%%%%%%

\begin{keyword}

Transient stability
\sep
Nonlinear stochastic systems
\sep
Finite-amplitude logarithmic measure
\sep
Finite-time analysis
\sep
Spacecraft navigation
\sep
Lunar descent

\end{keyword}

\end{frontmatter}

%%%%%%%%%%%%%%%%%%%%%%%%%%%%%%%%%%%%%%%%%%%%%%%%%%%%%%%%%%%%%%%%%%%%%%%%%%%%%%
%% Main Paper
%%%%%%%%%%%%%%%%%%%%%%%%%%%%%%%%%%%%%%%%%%%%%%%%%%%%%%%%%%%%%%%%%%%%%%%%%%%%%%

%=====================================================
%=====================================================	
\section{Introduction}
\label{sec:introduction}

Spacecraft guidance, navigation, and control (GNC) systems operate under significant uncertainty during mission-critical operations such as atmospheric entry, powered descent, planetary landing, rendezvous, and autonomous docking. During these phases, small state perturbations may undergo
substantial transient amplification over short time horizons, producing safety-critical deviations before asymptotic stability mechanisms become dominant. Quantifying finite-time stochastic perturbation growth is therefore fundamental for the design of robust autonomous guidance, navigation, and control algorithms for future space missions.

Classical stochastic stability theory has primarily focused on asymptotic, exponential, or moment-based notions of stability, typically established through Lyapunov methods, martingale techniques, or second-moment estimates
\cite{Mao2007,Arnold1998,Khasminskii2012}. Although mathematically well established these approaches primarily characterize asymptotic behavior and provide limited insight into finite-time transient perturbation growth.

Finite-time and fixed-time stabilization have been
extensively studied for deterministic and stochastic
systems \cite{Lee2022,Polyakov2012,Zheng2022,zuo2022fixed}.
However, these frameworks address convergence
properties rather than finite-time perturbation
amplification in nonlinear continuous-time It\^o
systems.

A complementary perspective is provided by
incremental stability and contraction theory
\cite{Angeli2002,LohmillerSlotine1998,
ForniSepulchre2014Auto,Pavlov2008,
ManchesterSlotine2017,DiBernardo2014,
Kawano2024,Kawano2025,
aminzare2022,li2024stochastic},
which characterize trajectory-wise perturbation
dynamics. Related studies on non-normal systems
\cite{Trefethen1993,Schmid2007}
demonstrate that substantial transient amplification
may occur despite asymptotic stability. However,
these approaches remain fundamentally based on
infinitesimal perturbations and do not characterize
finite-amplitude nonlinear transient growth.

Existing stochastic logarithmic norm formulations
\cite{ahmad2010estimation,aminzare2022}
are effective for linear and weakly nonlinear
systems. Extending them to computationally tractable
finite-time transient analysis of strongly nonlinear
It\^o stochastic systems without Jacobian
linearization remains an open problem.

These limitations are particularly important in
projection-based navigation and estimation systems,
where continuous data assimilation modifies both the
system dynamics and transient stability properties
\cite{Blackmore2010,Sikorski2021,Teel2014,
CrassidisJunkins2012,ReichCotter2015}. Similar
challenges arise in atmospheric entry,
powered-descent guidance, and planetary landing
systems operating under significant uncertainty,
where robust guidance and trajectory optimization
are essential for reliable mission performance
\cite{Lu2018,Sagliano2021,Xiong2022}.

To address these challenges, this paper develops a
unified finite-amplitude transient stability
framework for nonlinear It\^o stochastic systems,
establishing deterministic and probabilistic
finite-time transient bounds, extending the analysis
to projected dynamics, and introducing a
transient-risk index for risk-aware design.

The principal contributions are summarized as follows.
\begin{itemize}
\item \textbf{Finite-amplitude logarithmic measure.}
A logarithmic measure for nonlinear It\^o stochastic differential equations extending classical matrix measures from infinitesimal to finite-amplitude perturbation evolution.

\item \textbf{Unified finite-time transient stability theory.}
Finite-time mean, variance, and probabilistic transient-growth bounds together with a transient-risk index unifying deterministic contraction and diffusion-induced variability.

\item \textbf{Projected stochastic dynamics.}
Extension to projected stochastic dynamics with transient-risk-aware design, validated through Monte Carlo simulations and flight-like lunar descent telemetry.
\end{itemize}

Figure~\ref{fig:framework} summarizes the proposed
computational pipeline.

\begin{figure}[t]
\centering
\includegraphics[width=0.8\linewidth]{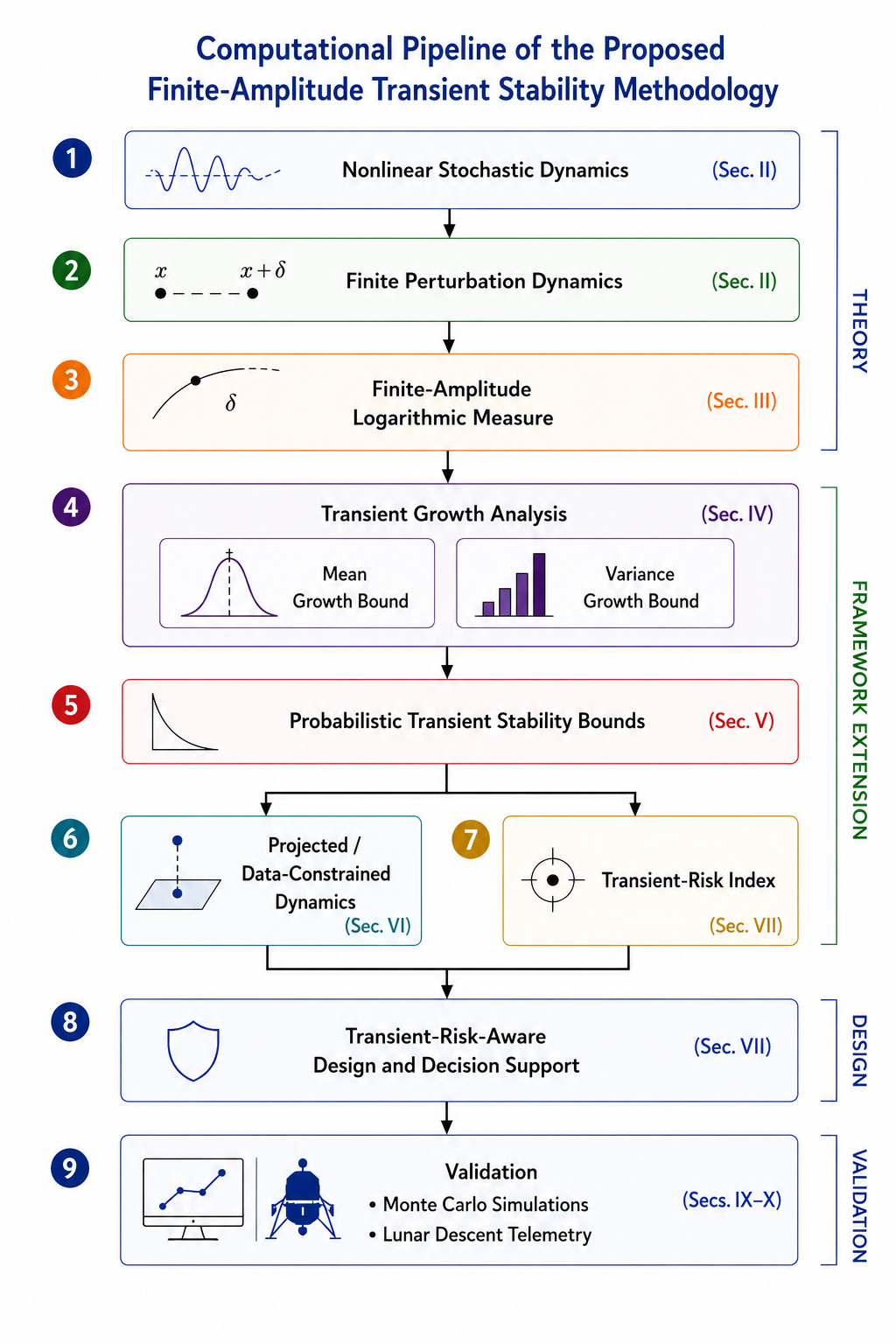}
\caption{Computational pipeline of the proposed finite-amplitude transient stability framework. Section numbers indicate where each stage of the methodology is developed.}
\label{fig:framework}
\end{figure}
%=====================================================
%=====================================================	

%=====================================================
%=====================================================	
\section{Preliminaries and Problem Statement}
\label{sec:preliminaries}

This section introduces the stochastic dynamical model,
finite-perturbation formulation, and transient-stability
problem underlying the proposed framework.

%-----------------------------------------------------
\subsection{Notation and Probability Space}

Let $(\Omega,\mathcal{F},\mathbb{P})$ be a complete probability
space equipped with a right-continuous filtration
$\{\mathcal{F}_t\}_{t\ge0}$. Expectations and variances are
denoted by $\mathbb{E}[\cdot]$ and $\operatorname{Var}(\cdot)$, respectively. For $x\in\mathbb{R}^n$, $\|x\|_p$ denotes the $\ell_p$ norm, with $\|\cdot\|$ denoting the Euclidean norm when no ambiguity arises. The identity matrix is denoted by $I$.

%-----------------------------------------------------
\subsection{Stochastic Differential Equation}

We consider nonlinear It\^o stochastic differential equations of the form
\begin{equation}
dX_t = f(X_t,t)\, dt + \sum_{j=1}^m
b_j(X_t,t)\, dW_t^j,
\label{eq:sde_general}
\end{equation}
where $X_t\in\mathbb{R}^n$ is the system state,
$f:\mathbb{R}^n\times\mathbb{R}_+\rightarrow\mathbb{R}^n$
is the drift field, $b_j:\mathbb{R}^n\times\mathbb{R}_+\rightarrow\mathbb{R}^n$
are diffusion fields, and $\{W_t^j\}_{j=1}^m$ are independent Wiener processes. Throughout, stochastic integrals are interpreted in
the It\^o sense, and all processes are assumed adapted
to $\{\mathcal F_t\}$ \cite{Mao2007}.

\begin{assumption}
\label{assum_reg}

The drift and diffusion mappings
$f(\cdot,t)$ and $b_j(\cdot,t)$,
$j=1,\ldots,m$, are locally Lipschitz in $x$
and satisfy the linear growth condition

\begin{equation}
\|f(x,t)\|
+
\sum_{j=1}^{m}
\|b_j(x,t)\|
\le
K(1+\|x\|)
\end{equation}

for some constant $K>0$.
\end{assumption}

Under Assumption~\ref{assum_reg}, \eqref{eq:sde_general}
admits a unique strong solution on every finite time interval
almost surely \cite{Mao2007,Arnold1998}. Consequently,
both the nominal trajectory and finite perturbation
trajectories are well defined throughout the analysis.
%-----------------------------------------------------
\subsection{Perturbation Dynamics}
To characterize finite-time transient behavior, we consider finite perturbations evolving under the same realization of the driving Wiener process. Let $\delta X_t$ denote a non-zero finite perturbation of the state trajectory $X_t$ along a given sample path. The corresponding perturbation dynamics are defined as
\begin{equation}
d(\delta X_t)
=
\delta f_t\, dt
+
\sum_{j=1}^m
\delta b_{j,t}\, dW_t^j,
\label{eq:perturbation_general}
\end{equation}
where
\begin{align}
\delta f_t &:= f(X_t+\delta X_t,t) - f(X_t,t), \notag\\
\delta b_{j,t} &:= b_j(X_t+\delta X_t,t) - b_j(X_t,t).
\end{align}

Equation~\eqref{eq:perturbation_general}
defines the exact nonlinear perturbation dynamics without
Jacobian linearization or variational equations.

Since perturbation amplification is inherently multiplicative,
transient growth is characterized through the logarithm of the
perturbation norm.
\begin{definition}[Transient Stability]
Let $\delta X_t$ denote the perturbation process associated with
\eqref{eq:sde_general}. The system is said to be

\begin{itemize}

\item \emph{mean transiently stable on $[0,T]$} if

\[
\mathbb{E}\!\left[ \ln \|\delta X_{t+\Delta t}\|_p -
\ln \|\delta X_t\|_p \right] \le 0,
\]
for all sufficiently small $\Delta t > 0$ and all $t \in [0,T]$;
\item \emph{pathwise transiently stable with confidence level
$1-\epsilon$ on $[0,T]$} if
\[
\mathbb{P}\!\left( \sup_{t\in[0,T]}
\ln \|\delta X_t\|_p
\le \ln \|\delta X_0\|_p \right)
\ge 1-\epsilon .
\]
\end{itemize}
The objective of this paper is to characterize finite-time
perturbation growth on nonlinear stochastic flows through a
finite-amplitude logarithmic measure. To this end, we derive
deterministic and probabilistic finite-time bounds for
$\ln\|\delta X_t\|_p$, which form the basis for the subsequent
theoretical developments.
\end{definition}

%=====================================================
%=====================================================

%=====================================================
%=====================================================
\section{Logarithmic Norm in the Lipschitz Sense}
\label{sec:lognorm}
This section introduces a finite-amplitude logarithmic
measure for nonlinear mappings that extends classical
matrix measures from infinitesimal to finite perturbation
evolution.
%-----------------------------------------------------
\subsection{Definition}
Let $\phi:U\subset\mathbb{R}^n\rightarrow\mathbb{R}^n$
be locally Lipschitz. For any finite perturbation
$\delta x\neq0$, define \( \delta\phi := \phi(x+\delta x)-\phi(x).\)

\begin{definition}[Logarithmic Norm Induced by the $\ell_p$-Norm]
\label{def:lognorm}
\begin{align}
\mu_p(\phi)
:=
\limsup_{\substack{x,x+\delta x \in U \\ \|\delta x\|_p>0 \\ h \downarrow 0}}
\frac{\|\delta x + h\,\delta \phi\|_p - \|\delta x\|_p}
{h\,\|\delta x\|_p}.
\label{eq:lognorm_def}
\end{align}
\end{definition}

Unlike the classical matrix measure, Definition~\ref{def:lognorm}
is formulated directly on finite perturbation increments.

\begin{theorem}[Well-Posedness]
\label{thm:wellposed_mu}
Let $\phi:U\rightarrow\mathbb{R}^n$
be locally Lipschitz on an open set
$U\subset\mathbb{R}^n$. Then the logarithmic measure
$\mu_p(\phi)$ defined by \eqref{eq:lognorm_def}
is well defined and finite.
\end{theorem}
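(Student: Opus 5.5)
The plan is to bound the difference quotient in \eqref{eq:lognorm_def} above and below, uniformly in $h$, by a Lipschitz ratio. Well-definedness then follows from the fact that a $\limsup$ of a bounded family of real numbers always exists. Finiteness follows from Lipschitz continuity of $\phi$. No stochastic ingredients or earlier results beyond the definition are needed.

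\textbf{Step 1 (pointwise bound).} Fix $x,x+\delta x\in U$ with $\|\delta x\|_p>0$ and $h>0$. The reverse triangle inequality gives
\begin{equation*}
\bigl|\,\|\delta x+h\,\delta\phi\|_p-\|\delta x\|_p\,\bigr|\le h\,\|\delta\phi\|_p .
\end{equation*}
Dividing by $h\|\delta x\|_p>0$, the quotient $q(x,\delta x,h)$ satisfies
\begin{equation*}
|q(x,\delta x,h)|\le \frac{\|\phi(x+\delta x)-\phi(x)\|_p}{\|\delta x\|_p}.
\end{equation*}
Thus every quotient is a real number, bounded independently of $h$. This is the finite-amplitude analogue of the classical estimate $|\mu(A)|\le\|A\|$.

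\textbf{Step 2 (uniform Lipschitz bound).} Local Lipschitz continuity means that around every point of $U$ there is a neighbourhood $V$ on which $\|\phi(y)-\phi(z)\|_p\le L_V\|y-z\|_p$. Norm equivalence on $\mathbb{R}^n$ lets us use the $\ell_p$ norm for any $p$. For pairs $x,x+\delta x$ lying in such a region, Step~1 yields $-L_V\le q\le L_V$. The $\limsup$ over those pairs as $h\downarrow0$ therefore lies in $[-L_V,L_V]$.

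\textbf{Step 3 (existence of the limit).} The map $h\mapsto \|\delta x+h\,\delta\phi\|_p$ is convex. Hence the quotient is nondecreasing in $h$, so for each fixed pair the one-sided limit as $h\downarrow0$ exists. It equals the directional derivative of the norm at $\delta x$ in direction $\delta\phi$. This shows the object in \eqref{eq:lognorm_def} is an honest $\limsup$ of well-defined numbers.

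\textbf{Main obstacle.} The delicate point is Step~2: passing from local Lipschitz constants to a single constant valid on the whole set of pairs over which the $\limsup$ ranges. I would handle this by reading the $\limsup$ as taken over pairs in the region where the trajectories actually evolve. That region is a neighbourhood on which a common Lipschitz constant $L$ is available, and there one gets $|\mu_p(\phi)|\le L$. I would state this dependence explicitly in the write-up. If the pairs are allowed to range over all of an unbounded $U$ with unbounded local constants, the bound from Step~1 need not stay finite. Making the working set explicit is therefore exactly what secures finiteness.
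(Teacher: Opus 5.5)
Your argument is the paper's own: the reverse triangle inequality bounds the difference quotient in absolute value by $\|\delta\phi\|_p/\|\delta x\|_p\le L_p(\phi)$, so the $\limsup$ is finite, and your Step~3 (monotonicity of the quotient in $h$ by convexity) is a harmless refinement that the paper omits. The paper's one-line proof silently treats $L_p(\phi)$ as a single global constant, so your explicit restriction to a working set carrying a uniform Lipschitz constant is exactly the hypothesis it uses implicitly; your caveat is substantive, since for $\phi(x)=x^2$ on $U=\mathbb{R}$ the quotients tend to $2x+\delta x$ and $\mu_p(\phi)=+\infty$.
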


\begin{proof}
Since $\phi$ is locally Lipschitz,
\[
\|\delta\phi\|_p\le L_p(\phi)\|\delta x\|_p.
\]
The reverse triangle inequality gives
\[
\left|
\frac{\|\delta x+h\delta\phi\|_p-\|\delta x\|_p}
{h\|\delta x\|_p}
\right|
\le L_p(\phi),
\]
so the upper Dini derivative in
Definition~\ref{def:lognorm} is finite.
\end{proof}

An equivalent representation is
\begin{equation}
\mu_p(\phi)
=
\limsup_{h \to 0}
\frac{L_p(I + h\phi) - 1}{h},
\label{eq:lognorm_lipschitz}
\end{equation}
whenever the induced Lipschitz constant
$L_p(\cdot)$ is well defined. This is the nonlinear analogue of the classical matrix-measure formula.

\begin{theorem}[Consistency with Classical Matrix Measures]
\label{thm:consistency}
Suppose \( \phi(x)=Ax \) for some matrix $A\in\mathbb{R}^{n\times n}$. Then the nonlinear logarithmic measure defined by
\eqref{eq:lognorm_def}
coincides with the classical logarithmic norm,
\[
\mu_p(\phi) = \mu_p(A).
\]

\end{theorem}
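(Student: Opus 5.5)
For $\phi(x)=Ax$ the finite increment is exactly linear, $\delta\phi=\phi(x+\delta x)-\phi(x)=A\,\delta x$, with no dependence on the base point $x$. The plan is therefore to collapse the supremum over pairs $(x,x+\delta x)$ in \eqref{eq:lognorm_def} to a supremum over directions $v=\delta x$, and then compare with the classical definition
\[
\mu_p(A)=\lim_{h\downarrow0}\frac{\|I+hA\|_p-1}{h}.
\]
Substituting gives the difference quotient
\[
q(v,h):=\frac{\|(I+hA)v\|_p-\|v\|_p}{h\|v\|_p}.
\]
This is positively homogeneous of degree zero in $v$, so we may restrict to $\|v\|_p=1$, where $q(v,h)=(\|(I+hA)v\|_p-1)/h$. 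The choice of $x\in U$ is irrelevant, since $U$ is open and nonempty and any small $v$ can be realised as $\delta x$ with $x,x+\delta x\in U$.

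Next, we read the $\limsup$ in \eqref{eq:lognorm_def} as $\lim_{h\downarrow0}\sup_{v}q(v,h)$, consistent with the Lipschitz form \eqref{eq:lognorm_lipschitz}. For each fixed $h>0$,
\[
\sup_{\|v\|_p=1}q(v,h)=\frac{\|I+hA\|_p-1}{h}
\]
by the definition of the induced operator norm, since the sphere is compact and the supremum is attained. Equivalently, $L_p(I+h\phi)=\|I+hA\|_p$, so \eqref{eq:lognorm_lipschitz} reproduces the classical formula directly. Taking $h\downarrow0$ then yields $\mu_p(\phi)=\mu_p(A)$.

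It remains to show that the limit exists and that the order of limits is harmless. The map $h\mapsto(\|I+hA\|_p-1)/h$ is nondecreasing in $h>0$: the function $h\mapsto\|I+hA\|_p$ is convex and equals $1$ at $h=0$. It is also bounded below by $-\|A\|_p$, so the limit exists and $\limsup=\lim$.

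The main obstacle is the joint $\limsup$ over $(v,h)$ if it is read literally rather than as sup-then-limit. Here convexity again does the work. For each fixed $v$, the map $h\mapsto(\|v+hAv\|_p-1)/h$ is nondecreasing, so its $\limsup$ over $h\downarrow0$ is bounded above by its value at any $h_0>0$. That value is at most $(\|I+h_0A\|_p-1)/h_0$. Letting $h_0\downarrow0$ gives $\mu_p(\phi)\le\mu_p(A)$. For the reverse inequality, pick $v_h$ attaining the operator norm of $I+hA$ for a sequence $h\downarrow0$; along the pairs $(v_h,h)$ the quotient equals the classical difference quotient. Hence the joint $\limsup$ is at least $\mu_p(A)$, and equality follows.
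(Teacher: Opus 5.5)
Your proposal is correct and follows the same route as the paper: substitute $\delta\phi=A\,\delta x$, take the supremum over directions to obtain the induced norm $\|I+hA\|_p$, and recognize Dahlquist's formula. You also supply details the paper leaves implicit, namely degree-zero homogeneity, realizability of directions inside $U$, and existence of the limit via convexity of $h\mapsto\|I+hA\|_p$. The ``main obstacle'' you flag is not really one: with $\delta x$ unrestricted, the joint $\limsup$ as $h\downarrow0$ is by definition $\inf_{h_0>0}\sup_{0<h<h_0}\sup_{v}q(v,h)=\limsup_{h\downarrow0}\sup_{v}q(v,h)$. Your argument for that step is nonetheless valid.
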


\begin{proof}
For $\phi(x)=Ax$, we have
$\delta\phi=A\delta x$.
Substituting into
\eqref{eq:lognorm_def}
gives
\[
\mu_p(\phi) =
\limsup_{h\to0}
\frac{\|I+hA\|_p-1}{h},
\]
which is precisely the Dahlquist matrix measure.
\end{proof}

\begin{theorem}[Local Jacobian Representation]
\label{thm:jacobian_representation}

Assume that
$\phi$
is continuously differentiable. Then
\[
\delta\phi = J_{\phi}(x)\,\delta x
+ o\!\left(\|\delta x\|\right),
\qquad
\text{as } \|\delta x\|\to 0.
\]

Consequently,
\[
\mu_p(\phi)
\le
\sup_{x\in U}
\mu_p\!\left(
J_\phi(x)
\right)
\] in the infinitesimal perturbation limit.

\end{theorem}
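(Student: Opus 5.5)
The first claim follows from the definition of differentiability. Fix $x\in U$ and small $\delta x$ with the segment $[x,x+\delta x]\subset U$. Write $\delta\phi=\int_0^1 J_\phi(x+s\delta x)\,ds\,\delta x$. Then
\[
\delta\phi - J_\phi(x)\delta x=\int_0^1\bigl(J_\phi(x+s\delta x)-J_\phi(x)\bigr)ds\,\delta x .
\]
By continuity of $J_\phi$, the integrand's operator norm tends to zero as $\|\delta x\|\to0$. This gives the $o(\|\delta x\|)$ remainder, locally uniformly in $x$ on compact subsets.

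For the bound on $\mu_p(\phi)$, I would work with the difference quotient in \eqref{eq:lognorm_def} restricted to the infinitesimal regime $\|\delta x\|_p\to0$. Substituting the expansion gives
\[
\|\delta x+h\delta\phi\|_p\le\|(I+hJ_\phi(x))\delta x\|_p+h\,\|r(x,\delta x)\|_p,
\]
with $\|r\|_p=o(\|\delta x\|_p)$. Dividing by $h\|\delta x\|_p$ gives the quotient $\bigl(\|(I+hJ_\phi(x))\delta x\|_p-\|\delta x\|_p\bigr)/(h\|\delta x\|_p)$ plus $\|r\|_p/\|\delta x\|_p$. The first piece is bounded by $(\|I+hJ_\phi(x)\|_p-1)/h$, the second tends to zero as $\|\delta x\|_p\to0$, and norm equivalence on $\mathbb{R}^n$ lets the $o(\cdot)$ be taken in $\ell_p$.

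The remaining step, which I expect to be the main obstacle, is passing from the pointwise limits $(\|I+hJ_\phi(x)\|_p-1)/h\to\mu_p(J_\phi(x))$ to a uniform bound by $\sup_x\mu_p(J_\phi(x))$ under the joint $\limsup$. My plan is to use the convexity of $h\mapsto\|I+hA\|_p$. Convexity makes the quotient nonincreasing as $h\downarrow0$, so it dominates its limit from above. For $A$ ranging over the set $\{J_\phi(x)\}$, which is bounded on compact subsets by continuity, I would invoke Dini's theorem, or directly the estimate
\[
\|I+hA\|_p\le 1+h\mu_p(A)+O(h^2\|A\|_p^2).
\]
Either route gives uniform convergence on compacta, so the $\limsup$ over $(x,\delta x,h)$ of the first piece is at most $\sup_x\mu_p(J_\phi(x))$.

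If $U$ is not relatively compact, I would interpret the supremum via an exhaustion by compact sets, or assume $J_\phi$ is bounded. The phrase ``in the infinitesimal perturbation limit'' in the statement licenses restricting the $\limsup$ to $\|\delta x\|\to0$ jointly with $h\downarrow0$, so I would state the inequality for that restricted limit.
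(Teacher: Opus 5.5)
Your proposal is correct and follows the same route as the paper: Taylor-expand $\delta\phi$ about $x$, substitute into the difference quotient of Definition~\ref{def:lognorm}, and let $\|\delta x\|\to0$ so that only the Jacobian term survives. The paper's proof is a two-line sketch asserting that the classical Jacobian measure is recovered; your triangle-inequality split, the convexity/Dini (or $O(h^2\|A\|_p^2)$) argument for uniformity in $h$, and the compactness caveat on the remainder fill in details the paper leaves implicit, rather than constituting a different approach.
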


\begin{proof}
The first-order Taylor expansion gives
\[
\phi(x+\delta x) =
\phi(x)+J_\phi(x)\delta x+o(\|\delta x\|).
\]
Substituting into
Definition~\ref{def:lognorm}
and letting
$\|\delta x\|\to0$
recovers the classical Jacobian logarithmic measure.
\end{proof}

\begin{proposition}[Finite-Amplitude Deviation]
\label{prop:finite_amplitude_deviation}

Assume that
$\phi\in C^{2}(U)$. Then, for finite perturbations,

\[ \mu_p(\phi) = \mu_p\!\left(J_\phi(x)\right)
+ O(\|\delta x\|). \]
\end{proposition}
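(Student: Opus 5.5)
The plan is to read the statement pointwise. For fixed $x$ and a fixed finite increment $\delta x$, define the local quotient
\[
q(x,\delta x):=\limsup_{h\downarrow0}\frac{\|\delta x+h\,\delta\phi\|_p-\|\delta x\|_p}{h\,\|\delta x\|_p}.
\]
I will show that $q(x,\delta x)=\mu_p(J_\phi(x))+O(\|\delta x\|)$, with the $O(\cdot)$ constant uniform on compact subsets of $U$. The claim for $\mu_p(\phi)$ then follows by taking the supremum over $x$ and $\delta x$ with $\|\delta x\|\le r$, as in Definition~\ref{def:lognorm}. This is the finite-amplitude refinement of Theorem~\ref{thm:jacobian_representation}: there the remainder is only $o(\|\delta x\|)$, and the extra $C^2$ regularity upgrades it to $O(\|\delta x\|^2)$.

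The steps, in order, are as follows.

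\textbf{Step 1 (second-order Taylor).} Since $\phi\in C^2(U)$, on a compact convex neighbourhood $K\subset U$ the integral form of Taylor's theorem gives
\[
\delta\phi=J_\phi(x)\,\delta x+R(x,\delta x),\qquad \|R(x,\delta x)\|_p\le \tfrac12 M\,\|\delta x\|_p^2 ,
\]
where $M=\sup_K\|D^2\phi\|$. The constant depends only on $K$ and on the equivalence of norms in $\mathbb{R}^n$.

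\textbf{Step 2 (split the quotient).} By the triangle inequality,
\[
\bigl|\,\|\delta x+h\,\delta\phi\|_p-\|\delta x+hJ_\phi(x)\delta x\|_p\,\bigr|\le h\|R\|_p .
\]
Dividing by $h\|\delta x\|_p$ shows that the finite-difference quotient differs from the linear quotient by at most $\tfrac12M\|\delta x\|_p$, uniformly in $h>0$. Hence $q(x,\delta x)$ differs by at most $\tfrac12M\|\delta x\|_p$ from
\[
\limsup_{h\downarrow0}\frac{\|(I+hJ_\phi(x))\,\delta x\|_p-\|\delta x\|_p}{h\,\|\delta x\|_p}.
\]

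\textbf{Step 3 (identify the linear part).} The quantity from Step 2 is the directional one-sided derivative of $\|\cdot\|_p$ at $\delta x$ in the direction $J_\phi(x)\delta x$, normalized by $\|\delta x\|_p$. Its supremum over directions $\delta x/\|\delta x\|_p$ equals $\mu_p(J_\phi(x))$, by Theorem~\ref{thm:consistency} applied to the linear map $A=J_\phi(x)$. Combining this with Step 2 gives the two-sided estimate
\[
\sup_{\delta x/\|\delta x\|_p}q(x,\delta x)=\mu_p(J_\phi(x))+O(\|\delta x\|).
\]

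\textbf{Main obstacle.} The delicate point is the order of the limsup in Definition~\ref{def:lognorm}, which mixes $h\downarrow0$ with the supremum over $x$ and $\delta x$. As literally written, $\mu_p(\phi)$ is a single number and cannot depend on $\delta x$. So the statement must be interpreted at amplitude level $r=\|\delta x\|$, with the supremum restricted to perturbations of that size. The key fact that makes the interchange legitimate is that the bound in Step 2 holds uniformly in $h$, not merely in the limit. I would state this interpretation explicitly and then show that the error term is controlled by $\tfrac12 M r$. A lower bound of the same order also follows from Step 2, because that estimate is symmetric. This gives the equality with an $O(\|\delta x\|)$ error rather than only an inequality.
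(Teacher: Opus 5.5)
Your proposal is correct and follows the same route as the paper, which substitutes the second-order Taylor expansion $\delta\phi=J_\phi(x)\delta x+O(\|\delta x\|^2)$ into Definition~\ref{def:lognorm}; your bound that is uniform in $h$, and your explicit reading of the statement at a fixed amplitude, supply the rigor the paper's one-line argument leaves implicit. One bookkeeping point: the estimate $\mu_p(J_\phi(x))+O(\|\delta x\|)$ holds for the supremum over directions at fixed $x$ (your Step 3), not for each individual $q(x,\delta x)$ as your opening sentence claims, and taking the supremum over $x\in K$ as well gives $\sup_{x\in K}\mu_p(J_\phi(x))+O(r)$ rather than the value at a single $x$.
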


\begin{proof}
The second-order Taylor expansion gives
\[ \delta\phi = J_\phi(x)\delta x + O(\|\delta x\|^2). \]
Substituting into
Definition~\ref{def:lognorm}
yields
\[ \mu_p(\phi) = \mu_p(J_\phi(x)) + O(\|\delta x\|), \]
which proves the result.
\end{proof}
%-----------------------------------------------------
\subsection{First-Order Characterization}
A first-order expansion in the upper Dini derivative sense
admits the representation
\begin{equation}
\mu_p(\phi)
=
\limsup_{\substack{x,x+\delta x \in U \\ \|\delta x\|_p>0}}
\frac{1}{\|\delta x\|_p^p}
\sum_{i=1}^n
\operatorname{sign}(\delta x_i)\,
|\delta x_i|^{p-1}\,
\delta \phi_i,
\label{eq:lognorm_first}
\end{equation}
or, equivalently,
\begin{equation}
\mu_p(\phi)
=
(\delta \phi)^{\top}
\frac{\partial}{\partial(\delta x)}
\ln\!\big(\|\delta x\|_p\big),
\label{eq:lognorm_gradient}
\end{equation}
where
\begin{equation}
\frac{\partial}{\partial(\delta x)}
\ln\!\big(\|\delta x\|_p\big)
=
\frac{1}{\|\delta x\|_p^p}
\begin{bmatrix}
\delta x_1 |\delta x_1|^{p-2} \\
\vdots \\
\delta x_n |\delta x_n|^{p-2}
\end{bmatrix}.
\label{eq:lognorm_grad_explicit}
\end{equation}
We denote this gradient by
\(
g := \frac{\partial}{\partial(\delta x)} \ln \|\delta x\|_p.
\)
%-----------------------------------------------------
%-----------------------------------------------------
\subsection{Gradient and Hessian Properties}

Let \( V(\delta x) = \ln\|\delta x\|_p, \qquad \delta x \neq 0. \)

The gradient of $V$ is given by

\begin{equation}
\nabla V(\delta x)
=
\frac{1}{\|\delta x\|_p^{p}}
\begin{bmatrix}
\delta x_1 |\delta x_1|^{p-2}\\
\vdots\\
\delta x_n |\delta x_n|^{p-2}
\end{bmatrix},
\label{eq:gradient_log_norm}
\end{equation}

The Hessian exists almost everywhere for
$\delta x \neq 0$ and is given componentwise by

\begin{equation}
\begin{aligned}
\big[\nabla^2 V(\delta x)\big]_{ij}
&=
(p-1)
\frac{
|\delta x_i|^{p-2}
}
{
\|\delta x\|_p^{p}
}
\delta_{ij}
\\
&\quad
-
p
\frac{
\delta x_i |\delta x_i|^{p-2}
\,
\delta x_j |\delta x_j|^{p-2}
}
{
\|\delta x\|_p^{2p}
}.
\end{aligned}
\label{eq:hessian_components}
\end{equation}

where
$\delta_{ij}$
denotes the Kronecker delta.

Equivalently,

\begin{equation}
\nabla^2 V(\delta x)
=
(p-1)
\frac{
\operatorname{diag}
\!\left(
|\delta x|^{p-2}
\right)
}
{
\|\delta x\|_p^{p}
}
-
p\,gg^\top,
\label{eq:hessian_matrix}
\end{equation}

where
$g$
is defined in
\eqref{eq:gradient_log_norm}.

\begin{lemma}[Hessian Bound]
\label{lem:hessian_bound}

There exists a constant
$C_p>0$
depending only on the norm parameter
$p$
such that

\begin{equation}
\|
\nabla^2 V(\delta x)
\|
\le
\frac{C_p}
{\|\delta x\|^{2}}
,
\qquad
\delta x\neq0.
\label{eq:hessian_bound}
\end{equation}

\end{lemma}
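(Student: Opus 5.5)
The plan is to exploit the scale invariance of $V$ and then bound the two terms of \eqref{eq:hessian_matrix} separately, using only the relation $|\delta x_i|\le\|\delta x\|_p$ and H\"older duality. First, since $V(\lambda\,\delta x)=\ln\lambda+V(\delta x)$ for $\lambda>0$, differentiating twice in $\delta x$ gives
\[
\nabla^2V(\lambda\,\delta x)=\lambda^{-2}\,\nabla^2V(\delta x).
\]
Thus \eqref{eq:hessian_bound} reduces to showing that $\nabla^2V$ is bounded, wherever it exists, on the unit sphere $\{\|\delta x\|_p=1\}$. Rescaling from the $\ell_p$ sphere to $\|\cdot\|=\|\cdot\|_2$ only introduces a norm-equivalence factor. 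I would rather not invoke compactness, because the diagonal entries need not be continuous on the sphere. Instead I will bound the two terms directly.

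\textbf{Diagonal term, $p\ge2$.} Here $|\delta x_i|\le\|\delta x\|_p$ and $p-2\ge0$, so
\[
(p-1)\,\frac{|\delta x_i|^{p-2}}{\|\delta x\|_p^{p}}\le\frac{p-1}{\|\delta x\|_p^{2}}.
\]
Hence the diagonal matrix has operator norm at most $(p-1)\|\delta x\|_p^{-2}$.

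\textbf{Rank-one term.} Let $q=p/(p-1)$ be the conjugate exponent. Since $|\delta x_i|^{(p-1)q}=|\delta x_i|^p$, we get
\[
\|g\|_q=\frac{\|\delta x\|_p^{p/q}}{\|\delta x\|_p^{p}}=\frac{1}{\|\delta x\|_p}.
\]
Therefore $\|p\,gg^\top\|=p\|g\|_2^2\le p\,c_{n,p}^2\,\|\delta x\|_p^{-2}$, where $c_{n,p}$ is the equivalence constant between $\|\cdot\|_q$ and $\|\cdot\|_2$. The same calculation covers $p=1$: there the diagonal coefficient $p-1$ vanishes, $g=\operatorname{sign}(\delta x)/\|\delta x\|_1$ almost everywhere, and the Hessian is bounded by $n/\|\delta x\|_1^2$.

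Combining the two terms with $\|\delta x\|_p\ge c'_{n,p}\|\delta x\|$ gives \eqref{eq:hessian_bound}. The constant is
\[
C_p=\bigl(p-1+p\,c_{n,p}^2\bigr)\big/ {c'_{n,p}}^{2}.
\]
It depends on $p$ and, through the norm equivalences, on the ambient dimension $n$, which is fixed throughout. I would state this dependence explicitly.

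The main obstacle is the range $1<p<2$. There $p-2<0$, so $|\delta x_i|^{p-2}$ blows up as any single component $\delta x_i\to0$ while $\|\delta x\|_p$ stays fixed. In that range the diagonal term is not bounded by $C\|\delta x\|^{-2}$ uniformly, even away from the origin.

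I would handle this in one of two ways. The first is to state and prove the lemma for $p\in\{1\}\cup[2,\infty)$, which covers the cases $p=1,2,\infty$ used later. For $p=\infty$, $V$ is piecewise $\ln|\delta x_i|$, so its Hessian is $-e_ie_i^\top/\delta x_i^2$ almost everywhere, with norm $\|\delta x\|_\infty^{-2}$. The second, for $1<p<2$, is to prove the bound only on cones $\{\min_i|\delta x_i|\ge\eta\|\delta x\|_p\}$, with $C_p$ also depending on $\eta$.

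I would try the first route and add a remark that the almost-everywhere qualifier in the paper's Hessian formula does not by itself rescue uniformity for $1<p<2$.
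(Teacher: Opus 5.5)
Your proposal is correct, and it is more careful than the paper's own argument. The paper's proof consists only of your first step. It observes from \eqref{eq:hessian_matrix} that $\nabla^2V$ is homogeneous of degree $-2$, and then appeals to norm equivalence. Homogeneity only gives $\|\nabla^2V(\delta x)\|=\|\delta x\|^{-2}\,\|\nabla^2V(\delta x/\|\delta x\|)\|$. To conclude, one still needs $\nabla^2V$ to be bounded on the unit sphere, and the paper never checks this.

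Your explicit term-by-term estimates close exactly that gap. For $p\ge2$ you use $|\delta x_i|^{p-2}\le\|\delta x\|_p^{p-2}$ on the diagonal part, and the H\"older identity $\|g\|_q=\|\delta x\|_p^{-1}$ on the rank-one part. Your separate treatment of $p=1$ and $p=\infty$, via the a.e.\ Hessians $-ss^\top/\|\delta x\|_1^2$ and $-e_ie_i^\top/\delta x_i^2$, is also right.

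Your diagnosis of $1<p<2$ is correct, and it shows the lemma as stated is false there for $n\ge2$. Take $\delta x=(1,\epsilon)$. Then
\[
[\nabla^2V]_{22}=\frac{(p-1)\epsilon^{p-2}}{1+\epsilon^p}-\frac{p\,\epsilon^{2p-2}}{(1+\epsilon^p)^2}\to+\infty \quad\text{as } \epsilon\downarrow0,
\]
while $\|\delta x\|\to1$. The rank-one term cannot cancel the blow-up, because its diagonal entry tends to zero. Since the blow-up occurs on neighbourhoods of the coordinate hyperplanes, a set of positive measure, the ``almost everywhere'' qualifier does not save the statement.

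Your remark on the dimension is likewise substantive. For $p=1$ and $\delta x=(1,\epsilon,\dots,\epsilon)$, one gets $\|\nabla^2V\|\,\|\delta x\|^2\to n$. So no constant depending on $p$ alone can work.

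One caution concerns your phrase ``covers the cases used later.'' The paper states its theory for general $p$ and uses $p=2$ numerically. For $p=1$ and $p=\infty$, $V$ is not $C^2$ away from the origin. The a.e.\ Hessian you bound is therefore not what enters It\^o's formula in Theorem~\ref{thm:mean_bound}: local-time contributions appear on the kink sets, and the a.e.\ Hessian does not see them. If the lemma is to feed the It\^o argument, the clean range is $2\le p<\infty$, where $V\in C^2(\mathbb{R}^n\setminus\{0\})$. Similarly, your cone version for $1<p<2$ proves a true statement, but the perturbation process need not remain in such a cone, so it does not by itself rescue the downstream mean bound.
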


\begin{proof}
Equation~\eqref{eq:hessian_matrix} shows that every Hessian term scales as
$\|\delta x\|^{-2}$.
Norm equivalence in finite-dimensional spaces then yields
\[
\|\nabla^2V(\delta x)\|
\le
\frac{C_p}{\|\delta x\|^2},
\]
for some constant $C_p>0$.
\end{proof}

%-----------------------------------------------------
\subsection{Structural Properties}

\begin{proposition}[Subadditivity]
For any locally Lipschitz mappings \(\phi_1\) and \(\phi_2\),
the logarithmic norm satisfies the conservative bound
\begin{equation}
\mu_p(\phi_1+\phi_2)
\le
\mu_p(\phi_1)
+
L_p(\phi_2),
\label{eq:lognorm_subadd}
\end{equation}
where \(L_p(\phi_2)\) denotes the \(\ell_p\)-Lipschitz constant of
\(\phi_2\).
\end{proposition}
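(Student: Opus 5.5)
The plan is to work directly with the difference quotient in Definition~\ref{def:lognorm}, fixing a base point $x$, a finite perturbation $\delta x\neq0$ with $x,x+\delta x\in U$, and $h>0$. Since increments are additive,
\[
\delta(\phi_1+\phi_2)=\delta\phi_1+\delta\phi_2 .
\]
The norm appearing in the quotient can therefore be split with the triangle inequality:
\[
\|\delta x+h\,\delta\phi_1+h\,\delta\phi_2\|_p
\le
\|\delta x+h\,\delta\phi_1\|_p+h\,\|\delta\phi_2\|_p .
\]

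Next I subtract $\|\delta x\|_p$ and divide by $h\|\delta x\|_p>0$. This bounds the quotient for $\phi_1+\phi_2$ by the quotient for $\phi_1$ plus the remainder term $\|\delta\phi_2\|_p/\|\delta x\|_p$. The remainder is controlled by the Lipschitz estimate already used in the proof of Theorem~\ref{thm:wellposed_mu}, namely $\|\delta\phi_2\|_p\le L_p(\phi_2)\|\delta x\|_p$, so it is at most $L_p(\phi_2)$.

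This yields the pointwise inequality
\[
\frac{\|\delta x+h\,\delta(\phi_1+\phi_2)\|_p-\|\delta x\|_p}{h\|\delta x\|_p}
\le
\frac{\|\delta x+h\,\delta\phi_1\|_p-\|\delta x\|_p}{h\|\delta x\|_p}
+L_p(\phi_2),
\]
valid uniformly in $x$, $\delta x$ and $h$. Taking the $\limsup$ over the same admissible set on both sides, and using that $\limsup$ is monotone and that adding a constant commutes with it, gives \eqref{eq:lognorm_subadd}. Both sides are finite by Theorem~\ref{thm:wellposed_mu}.

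The main obstacle is making sure $L_p(\phi_2)$ is a genuine constant over the whole region where the $\limsup$ is taken. Local Lipschitzness alone gives only a constant on compact subsets, so I would take $L_p(\phi_2)$ to mean the best Lipschitz constant on $U$, that is, the supremum of $\|\delta\phi_2\|_p/\|\delta x\|_p$. If this is infinite, the bound is trivially true. Alternatively, I would restrict to a compact convex subset as in Theorem~\ref{thm:wellposed_mu}. With this interpretation the remaining steps are routine, and no differentiability of $\phi_1$ or $\phi_2$ is required.
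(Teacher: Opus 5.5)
Your proof is correct and follows the same route as the paper. Both split the difference quotient for $\phi_1+\phi_2$ with the triangle inequality, bound the remainder $\|\delta\phi_2\|_p/\|\delta x\|_p$ by $L_p(\phi_2)$, and pass to the $\limsup$; you apply the Lipschitz bound pointwise before the limit rather than after, which is immaterial, and your reading of $L_p(\phi_2)$ as the best constant over the whole admissible region (trivial if infinite) usefully makes explicit what the paper leaves implicit.
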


\begin{proof}

From Definition~\eqref{eq:lognorm_def},
\[
\mu_p(\phi_1+\phi_2)
=
\limsup_{h\to0^+}
\frac{
\|\delta x + h(\delta\phi_1+\delta\phi_2)\|_p
-
\|\delta x\|_p
}{
h\|\delta x\|_p
}.
\]

Using the triangle inequality,
\[
\|\delta x + h(\delta\phi_1+\delta\phi_2)\|_p
\le
\|\delta x + h\delta\phi_1\|_p
+
h\|\delta\phi_2\|_p.
\]

Hence,
\(
\mu_p(\phi_1+\phi_2)
\le
\mu_p(\phi_1)
+
\limsup
\frac{
\|\delta\phi_2\|_p
}{
\|\delta x\|_p
}.
\)

Using Lipschitz continuity of \(\phi_2\),
\(
\|\delta\phi_2\|_p
\le
L_p(\phi_2)\|\delta x\|_p,
\)
which yields the result.
\end{proof}

%--------------------------------------------

\begin{proposition}[Positive Homogeneity]
\label{prop:homogeneity}

For every scalar $c\ge0$,

\[
\mu_p(c\phi) = c\,\mu_p(\phi).
\]

\end{proposition}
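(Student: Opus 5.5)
The plan is to handle the case $c=0$ separately and, for $c>0$, to reduce the claim to a change of variables in the step-size parameter $h$ of Definition~\ref{def:lognorm}.

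\textbf{Case $c=0$.} Here $c\phi\equiv 0$, so $\delta(c\phi)=0$ for every admissible pair $x,x+\delta x\in U$. The difference quotient in \eqref{eq:lognorm_def} is then identically zero, giving $\mu_p(0)=0=0\cdot\mu_p(\phi)$. This uses the fact that $\mu_p(\phi)$ is finite by Theorem~\ref{thm:wellposed_mu}, so that $0\cdot\mu_p(\phi)$ is a well-defined real number.

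\textbf{Case $c>0$.} The increment is linear in $\phi$: $\delta(c\phi)=c\,\delta\phi$. Substituting into \eqref{eq:lognorm_def} gives
\[
\frac{\|\delta x+h\,c\,\delta\phi\|_p-\|\delta x\|_p}{h\|\delta x\|_p}
=
c\,\frac{\|\delta x+k\,\delta\phi\|_p-\|\delta x\|_p}{k\|\delta x\|_p},
\qquad k:=ch .
\]
Since $c>0$ is fixed, the map $h\mapsto ch$ is an order-preserving bijection of $(0,\infty)$, so $h\downarrow0$ if and only if $k\downarrow0$. The joint filter over $(x,\delta x,h)$ in the $\limsup$ is therefore carried exactly onto the corresponding filter over $(x,\delta x,k)$. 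Finally, multiplying by a positive constant commutes with $\limsup$: $\limsup(c\,a)=c\limsup a$ for $c>0$. Together these give $\mu_p(c\phi)=c\,\mu_p(\phi)$.

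The only point requiring care is the justification that the joint $\limsup$ over $(x,\delta x,h)$ is invariant under the reparametrization $h\mapsto ch$. To make this transparent, I would write the $\limsup$ as an infimum over $\eta>0$ of suprema over the set $\{h\in(0,\eta),\ x,x+\delta x\in U,\ \delta x\neq0\}$. The substitution $k=ch$ maps this set onto the analogous set with $\eta$ replaced by $c\eta$. Since $\eta\mapsto c\eta$ is a bijection of $(0,\infty)$, the infimum over $\eta$ is unchanged, which completes the argument.
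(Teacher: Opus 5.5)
Your proof is correct and follows the paper's own argument: dispose of $c=0$ directly and, for $c>0$, substitute $\tilde h=ch$ in Definition~\ref{def:lognorm}. Your $\inf_{\eta>0}\sup$ rewriting of the joint $\limsup$ makes explicit the invariance under $h\mapsto ch$ that the paper's one-line proof leaves implicit. One small caveat: for $c=0$ you rely on Theorem~\ref{thm:wellposed_mu} for finiteness of $\mu_p(\phi)$, which genuinely needs a uniform Lipschitz bound on $U$ (e.g.\ $\phi(x)=x^2$ on $\mathbb{R}$ gives $\mu_p(\phi)=+\infty$), so in general the identity should be read with the convention $0\cdot\infty=0$.
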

\begin{proof}
For $c=0$ the result is immediate.
For $c>0$, letting $\tilde h=ch$ in
Definition~\ref{def:lognorm}
gives
\[
\mu_p(c\phi)=c\mu_p(\phi).
\]
\end{proof}

%--------------------------------------------

\begin{proposition}[Logarithmic Norm Upper Bound]
If the nonlinear mapping
\(\phi:\mathbb{R}^n\to\mathbb{R}^n\)
is Lipschitz continuous with respect to the
\(\ell_p\)-norm with Lipschitz constant
\(L_p(\phi)\), then
\begin{equation}
\mu_p(\phi)
\le
L_p(\phi).
\label{eq:lognorm_bounds}
\end{equation}
\end{proposition}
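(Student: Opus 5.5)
The plan is to bound the difference quotient in Definition~\ref{def:lognorm} uniformly by $L_p(\phi)$, and then pass to the $\limsup$. This is essentially a one-sided version of the estimate used in the proof of Theorem~\ref{thm:wellposed_mu}.

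\textbf{Step 1.} Fix $x,x+\delta x\in\mathbb{R}^n$ with $\|\delta x\|_p>0$ and $h>0$. By the triangle inequality for $\|\cdot\|_p$,
\[
\|\delta x+h\,\delta\phi\|_p\le \|\delta x\|_p+h\|\delta\phi\|_p .
\]
Hence
\[
\frac{\|\delta x+h\,\delta\phi\|_p-\|\delta x\|_p}{h\,\|\delta x\|_p}
\le \frac{\|\delta\phi\|_p}{\|\delta x\|_p}.
\]
This is exactly the manipulation used in the proof of the Subadditivity proposition, applied with $\phi_1=0$ and $\phi_2=\phi$.

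\textbf{Step 2.} Invoke global Lipschitz continuity with respect to $\ell_p$:
\[
\|\delta\phi\|_p=\|\phi(x+\delta x)-\phi(x)\|_p\le L_p(\phi)\|\delta x\|_p .
\]
So the quotient is bounded above by $L_p(\phi)$. This holds uniformly in $x$, $\delta x\neq0$, and $h>0$.

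\textbf{Step 3.} Take the $\limsup$ over $x,x+\delta x\in\mathbb{R}^n$, $\|\delta x\|_p>0$, $h\downarrow0$. A uniform upper bound on every element of the family is preserved under $\limsup$, which gives $\mu_p(\phi)\le L_p(\phi)$.

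An alternative route, as a cross-check, uses the Subadditivity proposition directly: write $\phi=0+\phi$, and note that $\mu_p(0)=0$ since the quotient vanishes identically. Then $\mu_p(\phi)\le\mu_p(0)+L_p(\phi)=L_p(\phi)$.

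There is no real obstacle. The only point needing care is that the $\limsup$ ranges over all base points $x$ as well as $h$. This is why a \emph{global} Lipschitz constant on $\mathbb{R}^n$ is needed, rather than the local constants of Theorem~\ref{thm:wellposed_mu}, so that the bound in Step 2 is uniform. The hypothesis supplies exactly this.
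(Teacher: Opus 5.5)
Your proof is correct and follows essentially the same route as the paper: you combine the triangle inequality with the Lipschitz bound to get $\|\delta x+h\,\delta\phi\|_p\le(1+hL_p(\phi))\|\delta x\|_p$, which bounds the difference quotient by $L_p(\phi)$ uniformly, and this bound survives the $\limsup$. Your cross-check via the Subadditivity proposition with $\mu_p(0)=0$ is also valid, and your remark that the global Lipschitz hypothesis is what makes the bound uniform over all base points is the right point of care.
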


\begin{proof}
By Lipschitz continuity,
\(
\|\delta\phi\|_p
\le
L_p(\phi)\|\delta x\|_p.
\)

Using Definition~\eqref{eq:lognorm_def} together with the
triangle inequality,
\[
\|\delta x+h\delta\phi\|_p
\le
(1+hL_p(\phi))\|\delta x\|_p.
\]

Dividing by \(h\|\delta x\|_p\) and taking
\(h\to0^+\) yields
\[
\mu_p(\phi)\le L_p(\phi).
\]
\end{proof}
%=====================================================
%=====================================================	

%=====================================================
%=====================================================	
\section{Transient Growth of Stochastic Perturbations}
\label{sec:transient}
This section develops the finite-time transient stability
theory underlying the proposed framework. Building upon
the finite-amplitude logarithmic measure of
Section~\ref{sec:lognorm}, we derive explicit mean and
variance bounds for logarithmic perturbation growth,
forming the basis for the probabilistic analysis in
Section~\ref{sec:probabilistic}.

\subsection{Perturbation Dynamics and Logarithmic Growth}
Applying It\^o's formula to the logarithmic perturbation norm yields
\begin{align}
	 d\big(\ln\|\delta X_t\|_p\big) =&
	\Big(
	\delta f_t^{\top}
	\nabla_{\delta x}\ln\|\delta x\|_p
	+ \nonumber\\ &\quad \frac{1}{2}
	\sum_{j=1}^m
	\delta b_{j,t}^{\top}
	\nabla^2 _{\delta x}\ln\|\delta x\|_p
	\,\delta b_{j,t}
	\Big) dt
    \nonumber\\ &\quad
	+ \sum_{j=1}^m
	\delta b_{j,t}^{\top}
	\nabla_{\delta x}\ln\|\delta x\|_p
\, dW_t^j,
	\label{eq:ito_log_norm}
\end{align}

%-----------------------------------------------------
\subsection{Mean Transient Growth Bound}
Taking expectations in \eqref{eq:ito_log_norm}
eliminates the martingale term and yields the following
upper bound on the expected logarithmic perturbation
growth rate.

\begin{theorem}[Mean Transient Growth Bound]
\label{thm:mean_bound}

Assume that each diffusion field
$b_j$ is locally Lipschitz with constant $L_{b_j}$. Then
\begin{equation}
\mathbb E
\!\left[
d\ln\|\delta X_t\|_p
\right]
\le
\left(
\mu_p(f)
+
\frac{C_p}{2}
\sum_{j=1}^{m}
L_{b_j}^{\,2}
\right)
dt ,
\label{eq:mean_bound}
\end{equation}

where
\[
\alpha :=
\mu_p(f)
+
\frac{C_p}{2}
\sum_{j=1}^{m}
L_{b_j}^{\,2},
\]

and
\(C_p\)
is the constant introduced in
Lemma~\ref{lem:hessian_bound}.

\end{theorem}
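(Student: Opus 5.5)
The plan is to start from the It\^o expansion \eqref{eq:ito_log_norm} and bound the drift and diffusion-correction terms separately, after the stochastic integral has been removed by taking expectations. For the martingale term, I would first localize with stopping times $\tau_k := \inf\{t : \|\delta X_t\|_p \le 1/k \text{ or } \|\delta X_t\|_p \ge k\}$. On $[0,\tau_k]$, the gradient \eqref{eq:gradient_log_norm} satisfies $\|\nabla V\| \le c_p/\|\delta X_t\|$, and by the Lipschitz assumption $\|\delta b_{j,t}\| \le L_{b_j}\|\delta X_t\|$. Hence each integrand $\delta b_{j,t}^\top \nabla V$ is bounded by $c_p L_{b_j}$, which is uniformly bounded, and the stopped stochastic integral has zero mean. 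Letting $k\to\infty$, using that $\delta X_t \neq 0$ a.s. on finite horizons (uniqueness of strong solutions under Assumption~\ref{assum_reg} keeps two distinct trajectories from meeting), justifies the expectation statement. It is read in the differential sense $\mathbb{E}[\ln\|\delta X_{t+\Delta t}\|_p - \ln\|\delta X_t\|_p] \le \alpha\,\Delta t$.

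For the drift term, I would use the first-order characterization \eqref{eq:lognorm_first}–\eqref{eq:lognorm_gradient}. At the pair $(x,\delta x) = (X_t, \delta X_t)$, the quantity $\delta f_t^\top \nabla_{\delta x}\ln\|\delta x\|_p$ is exactly the one-sided directional derivative $\lim_{h\downarrow0}(\|\delta x + h\delta f\|_p - \|\delta x\|_p)/(h\|\delta x\|_p)$. Since Definition~\ref{def:lognorm} takes a $\limsup$ over all admissible pairs, this is bounded pointwise by $\mu_p(f)$, with $f = f(\cdot,t)$ and $\mu_p$ understood as a sup over $t$ if time-varying. Where $\|\cdot\|_p$ is not differentiable (for $p=1,\infty$, or at coordinates with $\delta x_i=0$), I would interpret the gradient as in \eqref{eq:gradient_log_norm} almost everywhere. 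The alternative is to argue with the upper Dini derivative, which still gives the bound by $\mu_p(f)$.

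For the It\^o correction, I would bound each quadratic form by the operator norm: $\delta b_{j,t}^\top \nabla^2 V\, \delta b_{j,t} \le \|\nabla^2 V(\delta X_t)\|\,\|\delta b_{j,t}\|^2$. Applying Lemma~\ref{lem:hessian_bound} gives $\le C_p\|\delta X_t\|^{-2}\|\delta b_{j,t}\|^2$, and the Lipschitz bound $\|\delta b_{j,t}\| \le L_{b_j}\|\delta X_t\|$ cancels the $\|\delta X_t\|^{2}$. Here I would absorb any norm-equivalence constants between $\ell_p$ and Euclidean norms into $C_p$ or $L_{b_j}$. Summing over $j$ and multiplying by $\tfrac12$ gives $\tfrac{C_p}{2}\sum_j L_{b_j}^2$. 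Combining the three pieces pointwise in $\omega$ and then taking expectations yields \eqref{eq:mean_bound} with the stated $\alpha$.

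The main obstacle is making the expectation step rigorous. This requires both the localization argument for the martingale term and regularity of $V$ along the path. For $1<p<2$, the Hessian \eqref{eq:hessian_components} blows up where a coordinate of $\delta X_t$ vanishes, so It\^o's formula for $V$ is not directly applicable. My first attempt would be to mollify the norm, for instance by replacing $|\delta x_i|^p$ with $(\delta x_i^2+\varepsilon)^{p/2}$. I would then apply It\^o's formula to the smoothed $V_\varepsilon$, check that the bounds from Lemma~\ref{lem:hessian_bound} and the gradient estimate hold uniformly in $\varepsilon$, and pass $\varepsilon\to0$ by dominated convergence. If that fails, I would restrict the statement to $p\ge2$, or to the Euclidean case $p=2$, where $V$ is smooth away from the origin.
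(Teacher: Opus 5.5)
Your proposal is correct and is the paper's proof: It\^o's formula for $\ln\|\delta X_t\|_p$, expectation removing the stochastic integral, the pointwise bound $\delta f_t^\top g\le\mu_p(f)$ from the first-order characterization, and Lemma~\ref{lem:hessian_bound} with $\|\delta b_{j,t}\|\le L_{b_j}\|\delta X_t\|$ for the It\^o correction. On top of that you add localization and regularity care that the paper omits.

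Two cautions on that extra care. First, for $1<p<2$ the mollification cannot work, because the mollified Hessian entry at $\delta x_i=0$ grows like $\varepsilon^{p/2-1}$, so no $\varepsilon$-uniform bound exists. In fact Lemma~\ref{lem:hessian_bound} and the statement itself fail for $1<p<2$: with $f=0$, $b(x)=(x_2,0)^\top$ and $\delta X_0=(0,1)^\top$, the expected increment of $\ln\|\delta X_t\|_p$ over $[0,\Delta t]$ is of order $(\Delta t)^{p/2}\gg\Delta t$. So restricting to $2\le p<\infty$ is required, not a fallback. Second, non-collision of the two trajectories follows from $\ln\|\delta X_t\|_p$ having bounded drift and diffusion under the Lipschitz bounds, not from forward uniqueness, which by itself does not exclude coalescence.
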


\begin{proof}
Applying It\^o's formula to
$\ln\|\delta X_t\|_p$
and taking expectations eliminates the martingale
term, yielding
\[
\mathbb{E}\!\left[
d\ln\|\delta X_t\|_p
\right] = \left( \delta f_t^\top g
+ \frac12 \sum_{j=1}^{m} \delta b_{j,t}^\top
\nabla^2V\, \delta b_{j,t}
\right)dt .
\]

From the first-order characterization,
\[
\delta f_t^\top g \le \mu_p(f),
\]
while Lemma~\ref{lem:hessian_bound} together with
\[
\|\delta b_{j,t}\| \le L_{b_j}\|\delta x\|
\]
implies
\[
\delta b_{j,t}^{\top} \nabla^2V\, \delta b_{j,t}
\le C_pL_{b_j}^{\,2}.
\]
Substituting these bounds into \eqref{eq:ito_log_norm} gives \eqref{eq:mean_bound}.
\end{proof}

\begin{corollary}[Sufficient Mean Stability Condition]
\label{cor:mean_nonpositive}

If
\begin{equation}
\mu_p(f) + \frac{C_p}{2} \sum_{j=1}^{m} L_{b_j}^{\,2} \le 0,
\label{eq:sufficient_condition}
\end{equation}

then \[\mathbb{E} \!\left[ d\ln\|\delta X_t\|_p \right] \le 0. \]
Consequently, the expected logarithmic perturbation
norm is non-increasing over finite time intervals.
\end{corollary}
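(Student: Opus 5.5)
The corollary follows almost directly from Theorem~\ref{thm:mean_bound}. The plan is to combine the mean growth inequality \eqref{eq:mean_bound} with the sign hypothesis \eqref{eq:sufficient_condition}, and then integrate in time to get the monotonicity claim.

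\textbf{Step 1: the differential statement.} Theorem~\ref{thm:mean_bound} gives $\mathbb{E}[d\ln\|\delta X_t\|_p]\le\alpha\,dt$ with
\[
\alpha=\mu_p(f)+\tfrac{C_p}{2}\sum_j L_{b_j}^2 .
\]
Since $dt>0$ and hypothesis \eqref{eq:sufficient_condition} says exactly $\alpha\le 0$, we get $\alpha\,dt\le 0$. This yields $\mathbb{E}[d\ln\|\delta X_t\|_p]\le 0$.

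\textbf{Step 2: the integrated statement.} Here I would make the informal differential inequality rigorous. Integrate \eqref{eq:ito_log_norm} over $[s,t]\subset[0,T]$ to obtain
\[
\ln\|\delta X_t\|_p-\ln\|\delta X_s\|_p=\int_s^t D_r\,dr+M_t-M_s ,
\]
where $D_r$ is the drift and $M$ is the stochastic integral term. The integrand of $M$ is $\delta b_{j,r}^\top g$, and the bound $|\delta b_{j,r}^\top g|\le c_p L_{b_j}$ follows from $\|g\|\lesssim 1/\|\delta x\|$ (norm equivalence) together with the Lipschitz bound on $\delta b_j$. This integrand is therefore bounded, so $M$ is a true martingale with zero-mean increments. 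By the proof of Theorem~\ref{thm:mean_bound}, pointwise $D_r\le\alpha\le 0$. Taking expectations and applying Fubini gives $\mathbb{E}[\ln\|\delta X_t\|_p]\le \mathbb{E}[\ln\|\delta X_s\|_p]+\alpha(t-s)\le \mathbb{E}[\ln\|\delta X_s\|_p]$. Hence $t\mapsto\mathbb{E}[\ln\|\delta X_t\|_p]$ is non-increasing on $[0,T]$, which is also mean transient stability in the sense of the Definition.

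\textbf{Main obstacle.} The delicate point is justifying the It\^o formula for $V=\ln\|\delta x\|_p$. This needs $\delta X_t\neq 0$ for all $t$, and it needs $V$ to be regular enough, since the Hessian may exist only almost everywhere when $p<2$. I would first handle the nonvanishing issue: by pathwise uniqueness under Assumption~\ref{assum_reg}, two solutions with distinct initial data never coalesce, so $\delta X_t\neq0$ almost surely. For the regularity issue I would restrict to $p\ge2$, where $V$ is $C^2$ away from the origin, or else use a localization and smoothing argument. Integrability of $\ln\|\delta X_t\|_p$ should follow from the bounded drift and bounded martingale integrand above.
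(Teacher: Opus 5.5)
Your proposal is correct and follows the paper's route. The paper's proof simply combines Theorem~\ref{thm:mean_bound} with $\alpha\le0$ and integrates over a finite interval, and your Step~2 supplies what it leaves implicit: the true-martingale property from the bounded integrand $g^\top\delta b_{j,t}$, the pointwise drift bound, and Fubini.

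Two side remarks in your obstacle paragraph need correcting. First, non-vanishing of $\delta X_t$ does not follow from pathwise uniqueness: the forward-unique equation $\dot x=-\operatorname{sign}(x)\sqrt{|x|}$ has coalescing solutions. It follows instead from the non-confluence property of flows with (locally) Lipschitz coefficients, proved by a stopping-time version of the very It\^o computation you set up. Second, for $1<p<2$ no smoothing can rescue the argument. The diagonal term $(p-1)|\delta x_i|^{p-2}/\|\delta x\|_p^{p}$ in \eqref{eq:hessian_matrix} is unbounded as $\delta x_i\to0$ at fixed $\|\delta x\|_p$, so Lemma~\ref{lem:hessian_bound} and its constant $C_p$ genuinely require $2\le p<\infty$.
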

\begin{proof}
The result follows immediately from
Theorem~\ref{thm:mean_bound} by integrating
\eqref{eq:sufficient_condition}
over any finite interval.
\end{proof}

% \begin{proof}
% The result follows immediately from
% Theorem~\ref{thm:mean_bound}.
% If \eqref{eq:sufficient_condition} holds, then
% \[
% \mathbb{E}\!\left[ d\ln\|\delta X_t\|_p
% \right] \le0.
% \]
% Integrating over any finite interval $[0,T]$ gives
% \[
% \mathbb{E}\!\left[
% \ln\|\delta X_T\|_p
% \right] \le
% \mathbb{E}\!\left[ \ln\|\delta X_0\|_p \right],
% \]
% which completes the proof.
% \end{proof}
%----------------------------------------------------
\subsection{Variance Bound}
The following proposition bounds the diffusion-induced
variability of logarithmic perturbation growth.

\begin{proposition}[Variance Bound]
\label{prop:variance}

The infinitesimal variance growth of the logarithmic
perturbation norm satisfies

\begin{equation}
\operatorname{Var}
\!\left( d\ln\|\delta X_t\|_p
\right) \le K_p^2 \sum_{j=1}^{m}
L_{b_j}^{\,2} \,dt ,
\label{eq:variance_bound}
\end{equation}

where \(K_p>0\) depends only on the chosen $\ell_p$ norm.

\end{proposition}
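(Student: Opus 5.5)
The plan is to read off the infinitesimal variance directly from the It\^o decomposition \eqref{eq:ito_log_norm}. Conditional on $\mathcal F_t$, the increment $d\ln\|\delta X_t\|_p$ splits into a drift part of order $dt$ and the martingale part
\[
dM_t=\sum_{j=1}^m \delta b_{j,t}^{\top} g\, dW_t^j .
\]
The drift contributes only $O(dt^2)$ to the second moment. So, to leading order in $dt$, $\operatorname{Var}(d\ln\|\delta X_t\|_p)$ equals the quadratic variation increment $d\langle M\rangle_t$. Because the $W^j$ are independent, $dW^j_t dW^k_t=\delta_{jk}\,dt$, and we obtain
\[
d\langle M\rangle_t=\sum_{j=1}^m \big(\delta b_{j,t}^{\top} g\big)^2\,dt .
\]

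Each summand is bounded in two steps. First, Cauchy--Schwarz gives $|\delta b_{j,t}^{\top} g|\le \|\delta b_{j,t}\|\,\|g\|$. Second, we bound the gradient \eqref{eq:gradient_log_norm}. Its components have magnitude $|\delta x_i|^{p-1}/\|\delta x\|_p^p$, so $\|g\|_q=1/\|\delta x\|_p$, where $q$ is the H\"older conjugate of $p$ (indeed $\sum_i|\delta x_i|^{(p-1)q}=\|\delta x\|_p^p$). Norm equivalence in $\mathbb R^n$ then gives $\|g\|\le K_p/\|\delta x\|$ for a constant $K_p$ depending only on $p$ and $n$. This is the gradient analogue of Lemma~\ref{lem:hessian_bound}, and it uses the same homogeneity argument: $g$ is homogeneous of degree $-1$.

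Combining these with the Lipschitz estimate $\|\delta b_{j,t}\|\le L_{b_j}\|\delta X_t\|$, used exactly as in the proof of Theorem~\ref{thm:mean_bound}, gives $(\delta b_{j,t}^{\top}g)^2\le K_p^2L_{b_j}^2$. Summing over $j$ yields \eqref{eq:variance_bound}. The perturbation norm cancels, so the bound is uniform in the amplitude of $\delta X_t$.

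The main obstacle is justifying that the variance of the increment is governed by the diffusion term alone, since the drift is not deterministic given the past only in a loose sense. I would handle this by working conditionally on $\mathcal F_t$ over $[t,t+\Delta t]$. Write the increment as $A\Delta t+\sum_j\sigma_j\Delta W^j+o(\Delta t)$, with $A$ and $\sigma_j$ $\mathcal F_t$-measurable. The conditional variance is then $\sum_j\sigma_j^2\Delta t+o(\Delta t)$, and dividing by $\Delta t$ gives the statement. One also needs $\delta X_t\neq 0$ so that $\ln\|\cdot\|_p$ is smooth along the path. I will assume this, as Section~\ref{sec:transient} implicitly does: uniqueness of strong solutions under Assumption~\ref{assum_reg} prevents two distinct trajectories from merging. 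Nondifferentiability of $\|\cdot\|_p$ on coordinate hyperplanes for $p<2$ is treated with the same almost-everywhere convention used for the Hessian.
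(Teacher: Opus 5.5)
Your proposal is correct and follows the paper's proof essentially step for step: you take the martingale part of \eqref{eq:ito_log_norm}, compute its quadratic variation, apply Cauchy--Schwarz, and combine $\|g\|\le K_p/\|\delta x\|$ with $\|\delta b_{j,t}\|\le L_{b_j}\|\delta x\|$, and your H\"older computation $\|g\|_q=1/\|\delta x\|_p$ justifies the gradient bound that the paper only asserts. One side remark needs correcting: pathwise uniqueness alone does not stop $X_t$ and $X_t+\delta X_t$ from coalescing (it only forces them to agree after they meet); non-coalescence comes from the local Lipschitz bounds $\|\delta f_t\|,\|\delta b_{j,t}\|\le L\|\delta X_t\|$, which keep the drift and quadratic-variation rate of $\ln\|\delta X_t\|_2$ bounded, so it cannot reach $-\infty$ in finite time.
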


\begin{proof}
From \eqref{eq:ito_log_norm}, the stochastic contribution is
\[
\sum_{j=1}^{m} g^\top\delta b_{j,t}\,dW_t^j,
\]
where
\[
g=\nabla_{\delta x}\ln\|\delta x\|_p.
\]
Using the quadratic variation of It\^o integrals,
\[
\operatorname{Var}\!\left(
d\ln\|\delta X_t\|_p
\right) = \sum_{j=1}^{m} \mathbb E\!\left[
(g^\top\delta b_{j,t})^2 \right]dt.
\]
Since
\[
(g^\top\delta b_{j,t})^2
\le \|g\|^2\|\delta b_{j,t}\|^2,
\]
together with
\[
\|g\| \le \frac{K_p}{\|\delta x\|},
\qquad \|\delta b_{j,t}\|
\le L_{b_j}\|\delta x\|,
\]
we obtain
\[
(g^\top\delta b_{j,t})^2
\le K_p^2L_{b_j}^{\,2}.
\]
Substituting into the variance expression yields
\eqref{eq:variance_bound}.
\end{proof}

\begin{theorem}[Finite-Time Transient Stability Decomposition]
\label{thm:transient_decomposition}

Under the assumptions of
Theorem~\ref{thm:mean_bound}
and Proposition~\ref{prop:variance},
\[
d\ln\|\delta X_t\|_p=dM_t+dN_t,
\]
where
\[
\begin{aligned}
dM_t&:=
\left(
\delta f_t^\top g
+\frac12\sum_{j=1}^{m}
\delta b_{j,t}^{\top}\nabla^2V\,\delta b_{j,t}
\right)dt,\\
dN_t&:=
\sum_{j=1}^{m}
g^\top\delta b_{j,t}\,dW_t^j,
\end{aligned}
\]
with
\[
\mathbb{E}[dM_t]\le\alpha\,dt,\qquad
\operatorname{Var}(dN_t)\le\beta\,dt,
\]
where
\begin{equation}
\alpha=
\mu_p(f)+
\frac{C_p}{2}\sum_{j=1}^{m}L_{b_j}^{\,2},
\label{eq:alpha}
\end{equation} and

\begin{equation}
\beta=
K_p^2\sum_{j=1}^{m}L_{b_j}^{\,2}.
\label{eq:beta}
\end{equation}

\end{theorem}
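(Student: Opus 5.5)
The decomposition itself is read off directly from the It\^o expansion \eqref{eq:ito_log_norm}. Apply It\^o's formula to $V(\delta X_t)=\ln\|\delta X_t\|_p$, using the gradient \eqref{eq:gradient_log_norm} and the Hessian \eqref{eq:hessian_matrix}. This produces a drift part and a martingale part. The drift part is collected into $dM_t$, the $dt$-term consisting of $\delta f_t^\top g$ plus one half of the Itô correction $\sum_j\delta b_{j,t}^\top\nabla^2V\,\delta b_{j,t}$. The stochastic integral part is collected into $dN_t=\sum_j g^\top\delta b_{j,t}\,dW_t^j$. The identity $d\ln\|\delta X_t\|_p=dM_t+dN_t$ then holds by construction.

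Itô's formula requires justification, because $V$ is smooth only away from $\delta x=0$, and for $p\neq 2$ it is only $C^2$ almost everywhere. I would work on the stopping interval before $\|\delta X_t\|_p$ first hits a small level $\varepsilon$, or use a mollified $V$ there. The identity then holds up to that stopping time, and the bounds below are uniform in $\varepsilon$. The paper defines $\delta X_t$ as nonzero, so I would invoke that standing assumption rather than prove non-hitting.

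For the mean bound $\mathbb{E}[dM_t]\le\alpha\,dt$, I would repeat the argument of Theorem~\ref{thm:mean_bound} term by term. The first-order characterization \eqref{eq:lognorm_first}–\eqref{eq:lognorm_gradient} identifies $\delta f_t^\top g$ with the quantity whose $\limsup$ defines $\mu_p(f)$, so $\delta f_t^\top g\le\mu_p(f)$ pointwise. For the Itô correction, Lemma~\ref{lem:hessian_bound} and the Lipschitz bound $\|\delta b_{j,t}\|\le L_{b_j}\|\delta x\|$ give
\[
\delta b_{j,t}^\top\nabla^2V\,\delta b_{j,t}\le \frac{C_p}{\|\delta x\|^2}\,L_{b_j}^2\|\delta x\|^2=C_pL_{b_j}^2 .
\]
Summing over $j$ and taking expectations yields the constant $\alpha$ in \eqref{eq:alpha}.

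For the variance of $dN_t$, I would follow Proposition~\ref{prop:variance}. Since $N$ is a local martingale, $\mathbb{E}[dN_t]=0$. By the Itô isometry and the independence of the $W^j$, $\operatorname{Var}(dN_t)=\sum_j\mathbb{E}[(g^\top\delta b_{j,t})^2]\,dt$. Cauchy–Schwarz, $\|g\|\le K_p/\|\delta x\|$ (from \eqref{eq:gradient_log_norm} via homogeneity of degree $-1$ and norm equivalence) and the Lipschitz bound on $\delta b_{j,t}$ then give $\beta$ as in \eqref{eq:beta}.

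I expect the main obstacle to be rigor rather than computation. The step $\delta f_t^\top g\le\mu_p(f)$ requires that the first-order formula actually equals the Dini-derivative definition \eqref{eq:lognorm_def}. I would take that as given from the paper's first-order characterization. The other point needing care is ensuring the stochastic integral is a true martingale, so that the expectations vanish. Here the boundedness of $g^\top\delta b_{j,t}$ by $K_pL_{b_j}$ helps: the integrand is bounded, hence square-integrable, and so $N$ is a genuine $L^2$ martingale.
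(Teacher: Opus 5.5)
Your proposal is correct and follows the paper's own route: you read the decomposition directly off the It\^o expansion \eqref{eq:ito_log_norm}, and the two bounds are exactly those of Theorem~\ref{thm:mean_bound} and Proposition~\ref{prop:variance}, which you re-derive term by term where the paper simply cites them. Your extra remarks (localizing away from $\delta x=0$, and using $|g^\top\delta b_{j,t}|\le K_pL_{b_j}$ to make $N$ a genuine $L^2$ martingale) add rigor the paper omits; note, however, that, exactly as in the paper, the Hessian bound of Lemma~\ref{lem:hessian_bound} you invoke holds only for $2\le p<\infty$, because for $1<p<2$ the factor $|\delta x_i|^{p-2}$ blows up on the coordinate hyperplanes, and localizing near $\delta x=0$ does not remove that.
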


\begin{proof}
Equation~\eqref{eq:ito_log_norm}
immediately yields
\[
d\ln\|\delta X_t\|_p=dM_t+dN_t.
\]
The bounds
$\mathbb E[dM_t]\le\alpha dt$
and
$\operatorname{Var}(dN_t)\le\beta dt$
follow directly from
Theorem~\ref{thm:mean_bound}
and Proposition~\ref{prop:variance},
respectively.
\end{proof}

% \begin{theorem}[Finite-Time Transient Stability Decomposition]
% \label{thm:transient_decomposition}

% Under the assumptions of Theorem~\ref{thm:mean_bound} and Proposition~\ref{prop:variance}, the logarithmic perturbation dynamics satisfy
% \[
% d\ln\|\delta X_t\|_p
% =
% dM_t+dN_t,
% \]

% where \(dM_t\) denotes the deterministic drift
% contribution and \(dN_t\) denotes the stochastic
% martingale contribution.

% \[
% dM_t :=
% \left(
% \delta f_t^\top g
% +
% \frac12
% \sum_{j=1}^{m}
% \delta b_{j,t}^{\top}
% \nabla^2V
% \,
% \delta b_{j,t}
% \right)dt,
% \]

% and \[ dN_t :=
% \sum_{j=1}^{m}
% g^\top
% \delta b_{j,t}
% \,dW_t^j. \]
% Moreover, \( \mathbb{E}[dM_t] \le \alpha\,dt, \)
% with \(
% \alpha = \mu_p(f) +
% \frac{C_p}{2}
% \sum_{j=1}^{m}
% L_{b_j}^{\,2},
% \) and \( \operatorname{Var}(dN_t) \le \beta\,dt, \)
% where
% \( \beta = K_p^{2}
% \sum_{j=1}^{m} L_{b_j}^{\,2}.
% \)
% \end{theorem}
% \begin{proof}
% Equation~\eqref{eq:ito_log_norm} naturally separates
% the logarithmic perturbation dynamics into a drift
% term and a stochastic It\^o martingale term,

% \[
% d\ln\|\delta X_t\|_p
% =
% dM_t+dN_t.
% \]

% The expectation of the drift component satisfies
% \(
% \mathbb{E}[dM_t] \le \alpha\,dt,
% \)

% by Theorem~\ref{thm:mean_bound}, whereas the
% quadratic variation of the martingale component
% yields

% \[
% \operatorname{Var}(dN_t)
% \le
% \beta\,dt,
% \]

% by Proposition~\ref{prop:variance}.
% Hence the logarithmic perturbation dynamics admit the
% claimed decomposition into deterministic contraction
% and stochastic variability, completing the proof.
% \end{proof}

The quantities $\alpha$ and $\beta$ provide first- and
second-moment characterizations of finite-time logarithmic
perturbation growth and form the basis for the
probabilistic analysis in the next section.
%=====================================================
%=====================================================	

%=====================================================
%=====================================================	
\section{Probabilistic Transient Stability Bounds}
\label{sec:probabilistic}
This section derives finite-time probabilistic bounds on
transient amplification by combining the mean and variance
bounds of Section~\ref{sec:transient} with exponential
concentration arguments.

Define the logarithmic perturbation increment
\[
Y_t^{\Delta t}
= \ln\|\delta X_{t+\Delta t}\|_p
- \ln\|\delta X_t\|_p,
\]
over the interval $\Delta t$.

For sufficiently small $\Delta t$,
Theorem~\ref{thm:mean_bound} and Proposition~\ref{prop:variance}
imply
\begin{align}
\mathbb{E}[Y_t^{\Delta t}]
&\le
\alpha\,\Delta t \label{eq:mean_Y_new}
\\
\operatorname{Var}(Y_t^{\Delta t})
&\le
\beta\,\Delta t \label{eq:var_Y_new}
\end{align}
where $\alpha$ and $\beta$ are defined in
Theorem~\ref{thm:transient_decomposition}.
%-----------------------------------------------------
\subsection{Chernoff Bound}

For any $\zeta\ge0$, Chernoff's inequality yields
\begin{equation}
\mathbb{P}\!\left(Y_t^{\Delta t} \ge 0\right)
\le
\mathbb{E}\!\left[e^{\zeta Y_t^{\Delta t}}\right],
\label{eq:chernoff_start}
\end{equation}

For sufficiently small $\Delta t$, a second-order cumulant expansion yields
\begin{equation}
\log \mathbb{E}\!\left[e^{\zeta Y_t^{\Delta t}}\right]
\le
\zeta\,\mathbb{E}[Y_t^{\Delta t}]
+
\frac{\zeta^2}{2}\operatorname{Var}(Y_t^{\Delta t})
+ o(\Delta t),
\label{eq:cumulant_expansion}
\end{equation}
as $\Delta t \to 0$.

Substituting \eqref{eq:mean_Y_new} and \eqref{eq:var_Y_new}
into \eqref{eq:cumulant_expansion} yields
\begin{equation}
\log \mathbb{E}\!\left[e^{\zeta Y_t^{\Delta t}}\right]
\le
\zeta\,\alpha\,\Delta t
+
\frac{\zeta^2}{2}\beta\,\Delta t
+ o(\Delta t).
\end{equation}

Therefore,
\begin{equation}
\mathbb{P}\!\left(Y_t^{\Delta t} \ge 0\right)
\le \exp\!\left(
\zeta\,\alpha\,\Delta t
+ \frac{\zeta^2}{2}\beta\,\Delta t
+ o(\Delta t) \right).
\label{eq:chernoff_bound}
\end{equation}

For $\alpha \le 0$, the bound is minimized by choosing
\(
\zeta^\star = -\frac{\alpha}{\beta}.
\)

Substituting the optimal Chernoff parameter gives

\begin{equation}
\mathbb{P}\!\left(Y_t^{\Delta t} \ge 0\right)
\le \exp\!\left(
-\frac{\alpha^2}{2\beta}\,\Delta t
+ o(\Delta t) \right).
\label{eq:final_probability_bound}
\end{equation}
Equation~\eqref{eq:final_probability_bound}
shows that the probability of transient amplification
decays exponentially as deterministic contraction
increasingly dominates diffusion-induced variability. Consequently, $\alpha^2/\beta$ naturally emerges as the transient-risk index.
%-----------------------------------------------------
%-----------------------------------------------------
\subsection{Pathwise Safety}

Equation~\eqref{eq:final_probability_bound}
shows that mean transient stability does not guarantee
finite-time pathwise safety. Although the
expected logarithmic growth rate may be negative,
diffusion-induced variability permits transient
amplification with nonzero probability over finite
time horizons.

Collectively, these results characterize finite-time
transient amplification through the deterministic
growth-rate bound $\alpha$ and stochastic variability
bound $\beta$, forming the basis for the transient-risk
framework developed in Section~\ref{sec:design}.
%=====================================================
%=====================================================	

%=====================================================
%=====================================================	
\section{Data-Constrained and Projected Stochastic Dynamics}
\label{sec:projected}
This section extends the proposed framework to
projection-based stochastic systems.
%-----------------------------------------------------
\subsection{Projected Stochastic Dynamics}
Many estimation and navigation algorithms incorporate
continuous measurement updates or constraint
enforcement. 
We consider the projected stochastic dynamics
Let \( h(x,t) \in \mathbb{R}^k \)
denote a continuously differentiable observation or constraint function, with Jacobian
\(
H(x,t) := \frac{\partial h(x,t)}{\partial x}.
\)
Assume that $H(x,t)H(x,t)^{\top}$ is invertible for all $(x,t)$ of interest. Define the projection operator
\begin{equation}
\Pi(x,t) := I - 
H(x,t)^{\top}\!\big(H(x,t) H(x,t)^{\top}\big)^{-1} H(x,t).
\label{eq:projection}
\end{equation}

\begin{remark}
The invertibility of $H(x,t)H(x,t)^\top$ is a standard regularity condition in projection-based filtering and constrained estimation \cite{CrassidisJunkins2012,ReichCotter2015}.
\end{remark}

Consider the projected stochastic differential equation
\begin{align}
dX_t &=
\Pi(X_t,t)\, f(X_t,t)\, dt \nonumber\\
&\quad + H(X_t,t)^{\top}
\big(H(X_t,t) H(X_t,t)^{\top}\big)^{-1} \nonumber\\
&\quad \Big(dy_t^{\mathrm{obs}}
-\partial_t h(X_t,t)\, dt - dW_t
\Big),
\label{eq:projected_sde}
\end{align}
where $y_t^{\mathrm{obs}}$ denotes observed data and $W_t$ is a standard Wiener process.
%-----------------------------------------------------
\subsection{Perturbation Dynamics under Projection}

Let $\delta X_t$ denote a finite perturbation of~\eqref{eq:projected_sde}. 
The corresponding perturbation dynamics are given by
\begin{align}
d(\delta X_t)
&=
\delta(\Pi f)_t\, dt
\nonumber\\
&\quad
+ \delta\!\left(H^\top (H H^\top)^{-1}\right)_t
\left(
dy_t^{\mathrm{obs}} - \partial_t h\, dt - dW_t \right),
\label{eq:projected_perturbation}
\end{align}

where all derivatives are evaluated along the sample path $X_t$.
Here, $\delta(\Pi f)_t$ denotes the finite-difference increment of the composite mapping $x \mapsto \Pi(x,t)f(x,t)$, interpreted in the Lipschitz sense introduced in Section~\ref{sec:lognorm}.

The observation noise is assumed independent of the
process noise, and perturbations are interpreted in
the finite-difference sense.
%-----------------------------------------------------
\subsection{Logarithmic Norm Bound under Projection}
Applying the transient-growth analysis of Sections~\ref{sec:lognorm}--\ref{sec:probabilistic} to the composite mapping
\[
x \mapsto \Pi(x,t)f(x,t) + \mathcal{C}(x,t)\big(\bar{y}_t^{\mathrm{obs}} - \partial_t h(x,t)\big),
\]
yields the following bound.

\begin{theorem}[Projected Transient Growth Bound]
\label{thm:projected_bound}
The expected logarithmic perturbation growth rate satisfies the bound
\begin{align}
\mathbb{E}
\!\left[ d\ln\|\delta X_t\|_p \right]
&\le \mu_p \!\Big( \Pi f + \mathcal{C}
\big( \bar y_t^{\mathrm{obs}} -
\partial_t h \big) \Big) dt \nonumber\\
&\quad + \frac{C_p}{2} \sum_{j}
L_{\mathcal{C}^{j}}^{\,2} \,dt .
\label{eq:projected_mean_bound}
\end{align}
where
$\mathcal C(x,t) = H(x,t)^{\top}\!\big(H(x,t)H(x,t)^{\top}\big)^{-1}$
is the data-injection operator,
$\mathcal C^j$ its $j$th column, and
$\bar y_t^{\rm obs}$ the observation mean rate
interpreted in the weak (distributional) sense.
\end{theorem}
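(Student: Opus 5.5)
The plan is to recast the projected system \eqref{eq:projected_sde} in the general It\^o form \eqref{eq:sde_general} and then reuse Theorem~\ref{thm:mean_bound}. First, split the observation increment into a mean-rate part and a martingale part, $dy_t^{\mathrm{obs}} = \bar y_t^{\mathrm{obs}}\,dt + d\tilde M_t$, understood in the weak sense stated in the theorem. Absorbing the martingale part together with $-dW_t$ into the noise, \eqref{eq:projected_sde} becomes
\[
dX_t = F(X_t,t)\,dt - \sum_j \mathcal C^j(X_t,t)\,dW_t^j,
\]
\[
F := \Pi f + \mathcal C\big(\bar y_t^{\mathrm{obs}} - \partial_t h\big).
\]
The diffusion fields are then $b_j = -\mathcal C^j$. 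Any observation-noise contribution is taken as independent of the process noise, as assumed in the text, so it enters only the martingale and variance terms.

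Second, check the hypotheses. $\mathcal C$ and $\Pi$ are locally Lipschitz wherever $HH^\top$ is invertible, because matrix inversion is smooth on invertible matrices and $h$ is $C^1$; I will additionally need $H$ itself to be locally Lipschitz. Combined with local Lipschitzness of $f$, this makes $F$ locally Lipschitz, so Theorem~\ref{thm:wellposed_mu} shows $\mu_p(F)$ is finite. The constant $L_{\mathcal C^j}$ is the Lipschitz constant of $b_j$, and the sign flip $b_j=-\mathcal C^j$ does not affect it.

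Third, write the perturbation dynamics \eqref{eq:projected_perturbation} in the form \eqref{eq:perturbation_general}, with $\delta f_t$ replaced by $\delta F_t$ and $\delta b_{j,t} = -\delta\mathcal C^j_t$. Apply It\^o's formula \eqref{eq:ito_log_norm} and take expectations, which removes the martingale term. Then bound the drift term by $\delta F_t^\top g \le \mu_p(F)$ using the first-order characterization \eqref{eq:lognorm_first}. Bound the Hessian term by Lemma~\ref{lem:hessian_bound} and $\|\delta\mathcal C^j_t\|\le L_{\mathcal C^j}\|\delta X_t\|$, which gives at most $C_p L_{\mathcal C^j}^2$. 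This is the argument of Theorem~\ref{thm:mean_bound} verbatim with $f\to F$ and $b_j\to -\mathcal C^j$, and it produces \eqref{eq:projected_mean_bound}.

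The main obstacle is the first step: making the replacement of $dy_t^{\mathrm{obs}}$ by its mean rate $\bar y_t^{\mathrm{obs}}$ legitimate inside the expectation. The observation increment multiplies the state-dependent factor $\delta\mathcal C_t$, so the drift in the It\^o formula picks up the term $\delta\mathcal C_t\,\bar y_t^{\mathrm{obs}}$. Its fluctuating part $d\tilde M_t$ adds further quadratic-variation terms, and each of these is again of the form $\delta\mathcal C^\top\nabla^2V\,\delta\mathcal C$. I would first try to assume that $y^{\mathrm{obs}}$ is a semimartingale with bounded mean rate and unit-intensity (or bounded-intensity) noise independent of $W$, so that these extra terms fold into the same $C_p$-weighted sum after adjusting constants. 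If that fails, I would state the result conditionally on the observation path, treating $\bar y_t^{\mathrm{obs}}$ as deterministic data. A secondary technical issue is uniformity of the Lipschitz constants: I would restrict to a compact set on which $HH^\top$ is uniformly invertible, if necessary with a stopping-time localization.
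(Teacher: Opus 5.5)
Your proposal matches the paper's proof: the paper identifies the effective drift $\Pi f+\mathcal C(\bar y_t^{\mathrm{obs}}-\partial_t h)$ and the diffusion fields $\mathcal C^j$ of \eqref{eq:projected_perturbation}, then applies Theorem~\ref{thm:mean_bound} verbatim, exactly as you do. The paper implicitly takes your fallback, replacing $dy_t^{\mathrm{obs}}$ by $\bar y_t^{\mathrm{obs}}\,dt$ with no martingale part; your first option of absorbing an observation martingale of nonzero intensity would add quadratic variation to the Hessian term and give a constant larger than the stated $C_p/2$, so it would not reproduce the stated inequality.
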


\begin{proof}
The projected perturbation dynamics
\eqref{eq:projected_perturbation}
have effective drift
\[
f_{\rm proj}
= \Pi f + \mathcal C
\big( \bar y_t^{\rm obs} - \partial_t h \big),
\]
and diffusion fields given by the columns
$\mathcal C^j$ of $\mathcal C$.
Applying Theorem~\ref{thm:mean_bound}
to these dynamics immediately yields
\eqref{eq:projected_mean_bound}.
\end{proof}

Define the projected transient-growth parameter
\[
\alpha_{\rm proj} := \mu_p \!\Big( \Pi f
+ \mathcal C ( \bar y_t^{\rm obs} -
\partial_t h) \Big) +
\frac{C_p}{2}
\sum_{j=1}^{m}
L_{\mathcal C^j}^{\,2},
\]
%-----------------------------------------------------
\subsection{Impact of Data Injection on Transient Stability}

\begin{proposition}[Data-Induced Variance Growth]
\label{prop:data_variance}

The infinitesimal variance growth of the logarithmic
perturbation norm satisfies

\begin{equation}
\operatorname{Var}
\!\left(
d\ln\|\delta X_t\|_p
\right)
\le
K_p^2
\sum_{j=1}^{m}
L_{\mathcal C^j}^{\,2}
\,dt ,
\label{eq:data_variance}
\end{equation}

where
$L_{\mathcal C^j}$
denotes the Lipschitz constant of the
$j$th column of the data-injection operator
$\mathcal C$.

\end{proposition}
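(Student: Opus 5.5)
The plan is to mirror the proof of Proposition~\ref{prop:variance}, with the diffusion fields $b_j$ replaced by the columns $\mathcal C^j$ of the data-injection operator. First I will isolate the martingale part of the logarithmic perturbation dynamics under projection. From \eqref{eq:projected_perturbation}, the stochastic forcing of $\delta X_t$ comes from two sources. One is the centred part of $dy_t^{\mathrm{obs}}$, with its mean rate $\bar y_t^{\mathrm{obs}}$ already absorbed into the effective drift as in the proof of Theorem~\ref{thm:projected_bound}. The other is $-dW_t$. Both enter only through the increment $\delta\mathcal C_t := \mathcal C(X_t+\delta X_t,t)-\mathcal C(X_t,t)$. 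Writing the combined driving noise as an $m$-dimensional Wiener increment, I will apply It\^o's formula to $V(\delta x)=\ln\|\delta x\|_p$ exactly as in \eqref{eq:ito_log_norm}. This gives a martingale term of the form $\sum_{j=1}^m g^\top \delta\mathcal C^j_t\, dW_t^j$, with $g=\nabla V$ as in \eqref{eq:gradient_log_norm}. I will treat the noise as having identity covariance per column; a general observation-noise covariance would only rescale $L_{\mathcal C^j}$.

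Second, I will compute the infinitesimal variance through quadratic variation. This gives $\operatorname{Var}(d\ln\|\delta X_t\|_p)=\sum_j \mathbb E[(g^\top\delta\mathcal C^j_t)^2]\,dt$, with the drift contributing only at order $dt^2$. I will then use the Cauchy--Schwarz inequality $(g^\top\delta\mathcal C^j_t)^2\le\|g\|^2\|\delta\mathcal C^j_t\|^2$. Next I need the gradient bound $\|g\|\le K_p/\|\delta x\|$, which was used in Proposition~\ref{prop:variance}. It holds because $g$ is homogeneous of degree $-1$ in $\delta x$: by \eqref{eq:gradient_log_norm} each component scales as $\|\delta x\|^{p-1}/\|\delta x\|^p$. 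Norm equivalence then supplies $K_p$. Combining this with the Lipschitz bound $\|\delta\mathcal C^j_t\|\le L_{\mathcal C^j}\|\delta x\|$ cancels the $\|\delta x\|$ factors and yields $(g^\top\delta\mathcal C^j_t)^2\le K_p^2L_{\mathcal C^j}^2$ pathwise. Summing over $j$ and taking expectations gives \eqref{eq:data_variance}.

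The main obstacle is justifying the Lipschitz constants $L_{\mathcal C^j}$. The map $\mathcal C=H^\top(HH^\top)^{-1}$ is only locally Lipschitz, and only where $HH^\top$ stays uniformly invertible. I will address this by restricting to the region of interest and assuming it is compact, or at least that the smallest eigenvalue of $HH^\top$ is bounded below there. On such a region, $H$ being $C^1$ with bounded derivative makes $\mathcal C$ Lipschitz, via the identity $A^{-1}-B^{-1}=A^{-1}(B-A)B^{-1}$. I will also note that the observation and process noise are independent, as assumed after \eqref{eq:projected_perturbation}, so no cross-variation terms appear.
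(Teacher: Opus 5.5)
Your proposal is correct and follows essentially the same route as the paper: isolate the martingale term $\sum_j g^\top\delta\mathcal C^j_t\,dW_t^j$, compute its variance by It\^o isometry (quadratic variation), and use Cauchy--Schwarz with $\|g\|\le K_p/\|\delta x\|$ and $\|\delta\mathcal C^j_t\|\le L_{\mathcal C^j}\|\delta x\|$ to cancel the $\|\delta x\|$ factors. Your additional remarks make explicit several assumptions the paper's proof leaves implicit: the degree-$(-1)$ homogeneity of $g$, restricting to a region where $HH^\top$ is uniformly invertible so that $L_{\mathcal C^j}$ exists, and the unit normalization of the combined observation/process noise.
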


\begin{proof}
From
\eqref{eq:projected_perturbation},
the stochastic contribution is
\[
\sum_{j=1}^{m} \delta(\mathcal C^j)_t^\top g\,dW_t^j,
\]
where
\[
g=\nabla_{\delta x}\ln\|\delta x\|_p.
\]
Using It\^o isometry,
\[
\operatorname{Var}
\!\left( d\ln\|\delta X_t\|_p
\right) = \sum_{j=1}^{m}
\mathbb E \!\left[ (\delta(\mathcal C^j)_t^\top g)^2
\right]dt.
\]
Since
\[
\|g\| \le \frac{K_p}{\|\delta x\|},
\qquad \|\delta(\mathcal C^j)_t\|
\le L_{\mathcal C^j}\|\delta x\|,
\]
we obtain
\[
(\delta(\mathcal C^j)_t^\top g)^2
\le K_p^2L_{\mathcal C^j}^{\,2},
\]
which yields
\eqref{eq:data_variance}.
\end{proof}

The projected diffusion-variability parameter
\[
\beta_{\rm proj} := 
K_p^2 \sum_{j=1}^{m} L_{\mathcal C^j}^{\,2},
\]

\begin{theorem}[Projected Transient Stability]
\label{thm:projected_characterization}
The finite-time transient behavior of the projected
stochastic system is characterized by the pair
$(\alpha_{\rm proj},\beta_{\rm proj})$.
\end{theorem}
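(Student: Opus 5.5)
The statement is informal, so the first step is to make "characterized by the pair $(\alpha_{\rm proj},\beta_{\rm proj})$" precise. I will read it as the projected analogue of Theorem~\ref{thm:transient_decomposition} together with the probabilistic bound \eqref{eq:final_probability_bound}. Concretely, the claim to prove is that along the projected flow \eqref{eq:projected_sde} one has
\[
d\ln\|\delta X_t\|_p=dM_t^{\rm proj}+dN_t^{\rm proj},\qquad
\mathbb{E}[dM_t^{\rm proj}]\le\alpha_{\rm proj}\,dt,\qquad
\operatorname{Var}(dN_t^{\rm proj})\le\beta_{\rm proj}\,dt .
\]
Consequently, for $\alpha_{\rm proj}\le0$ the increment $Y_t^{\Delta t}$ should satisfy
\[
\mathbb{P}(Y_t^{\Delta t}\ge0)\le\exp\!\big(-\tfrac{\alpha_{\rm proj}^2}{2\beta_{\rm proj}}\Delta t+o(\Delta t)\big).
\]

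The plan is to recast \eqref{eq:projected_sde} in the form \eqref{eq:sde_general}, and then reuse the earlier results verbatim.

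\textbf{Reduction.} Split $dy_t^{\rm obs}=\bar y_t^{\rm obs}dt+(\text{martingale part})$. Absorbing the martingale part into the Wiener increment (using the assumed independence of observation and process noise), the system has drift $f_{\rm proj}=\Pi f+\mathcal C(\bar y_t^{\rm obs}-\partial_t h)$ and diffusion fields $b_j=-\mathcal C^j$.

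Some regularity must be verified. Since $h\in C^1$ and $HH^\top$ is invertible on the region of interest, the maps $\Pi$ and $\mathcal C$ are continuous there. Local Lipschitz continuity of $\mathcal C^j$ is what defines $L_{\mathcal C^j}$. For this I will additionally assume $H$ is locally Lipschitz (for instance $h\in C^{1,1}$), which is implicit in the finiteness of $L_{\mathcal C^j}$. Under this assumption, Assumption~\ref{assum_reg} holds locally, so strong solutions and the perturbation dynamics \eqref{eq:projected_perturbation} are well defined.

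\textbf{Decomposition.} Apply It\^o's formula \eqref{eq:ito_log_norm} to the reduced system. This splits $d\ln\|\delta X_t\|_p$ into the drift term $dM_t^{\rm proj}$, built from $\delta f_{\rm proj}$ and the Hessian term in $\delta\mathcal C^j$, and the martingale term $dN_t^{\rm proj}=-\sum_j g^\top\delta(\mathcal C^j)_t\,dW_t^j$. Theorem~\ref{thm:projected_bound} bounds the mean, giving $\alpha_{\rm proj}$. Proposition~\ref{prop:data_variance} bounds the variance, giving $\beta_{\rm proj}$. This reproduces Theorem~\ref{thm:transient_decomposition} with $(\alpha,\beta)$ replaced by $(\alpha_{\rm proj},\beta_{\rm proj})$.

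\textbf{Probabilistic bound.} Rerun the Chernoff argument of Section~\ref{sec:probabilistic}: the cumulant expansion \eqref{eq:cumulant_expansion} with the optimal parameter $\zeta^\star=-\alpha_{\rm proj}/\beta_{\rm proj}$. This yields the projected analogue of \eqref{eq:final_probability_bound} and identifies $\alpha_{\rm proj}^2/\beta_{\rm proj}$ as the projected transient-risk index.

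\textbf{Main obstacle.} The hardest step is the reduction. The observation increment $dy_t^{\rm obs}$ is not itself a Wiener increment, so I must justify that $(\bar y^{\rm obs}_t,W_t)$ captures both the drift and the quadratic variation. My first attempt is to assume $dy_t^{\rm obs}=\bar y_t^{\rm obs}dt+d\tilde W_t$ with $\tilde W$ independent of $W$. Then the combined noise $d\tilde W_t-dW_t$ is $\sqrt2$ times a standard Wiener increment, and the resulting factor $2$ is absorbed into the constants $C_p$ and $K_p$ (or into $L_{\mathcal C^j}$).

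A second subtlety is that Theorem~\ref{thm:mean_bound} uses the bound $\delta f^\top g\le\mu_p(f)$ uniformly. Here the drift $f_{\rm proj}$ depends on $t$ through $\bar y_t^{\rm obs}$, so I will state the result pointwise in $t$: $\alpha_{\rm proj}$ is allowed to be time-dependent, and the integrated bounds use $\int_0^T\alpha_{\rm proj}\,dt$.
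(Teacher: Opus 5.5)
Your proposal is correct and follows the paper's route. The paper's entire proof is the observation that the mean bound of Theorem~\ref{thm:projected_bound} and the variance bound of Proposition~\ref{prop:data_variance} together give the projected analogue of the $(\alpha,\beta)$ decomposition; your reduction, regularity remarks and Chernoff rerun only make explicit what the paper leaves implicit. One minor point: the paper lets the observation enter only through its mean rate $\bar y_t^{\rm obs}$ (in the weak sense), with diffusion fields exactly the columns $\mathcal C^j$ driven by $W$, so the factor $2$ you absorb into $C_p$ and $K_p$ never arises there, and absorbing it would change the pair unless those shared constants are enlarged throughout.
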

\begin{proof}
The result follows directly from
Theorem~\ref{thm:projected_bound} and
Proposition~\ref{prop:data_variance}.
\end{proof}

These results extend the proposed transient-stability
framework to projection-based stochastic systems and
provide the basis for the numerical validation in the
next section.
%=====================================================
%=====================================================

%=====================================================
%=====================================================	
\section{Design Implications for Transient-Risk-Aware Systems}
\label{sec:design}
\subsection{Transient-Risk Index}

The probabilistic analysis of
Section~\ref{sec:probabilistic}
shows that finite-time transient amplification is
governed by the ratio
$\alpha^{2}/\beta$.
This observation motivates the definition of the
transient-risk index

\begin{equation}
J_{\mathrm{risk}} := \frac{\alpha^{2}}{\beta},
\label{eq:risk_metric}
\end{equation}
whenever
$\beta>0$.

\begin{proposition}[Monotonicity of the Transient-Risk Index]
\label{prop:risk_ordering}

Consider two stochastic systems with transient-risk
indices
\[
J_i=\frac{\alpha_i^2}{\beta_i},
\qquad i=1,2,
\]
defined by \eqref{eq:risk_metric}, and suppose that
\(J_1>J_2\). Then, for sufficiently small
\(\Delta t>0\),
\[
\exp\!\left(
-\frac12J_1\Delta t
\right)
<
\exp\!\left(
-\frac12J_2\Delta t
\right).
\]
Consequently, the Chernoff upper bound
\eqref{eq:final_probability_bound}
for transient amplification is strictly smaller for
System~1 than for System~2.

\end{proposition}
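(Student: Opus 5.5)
The argument reduces to the strict monotonicity of the exponential, applied to the exponents $-\tfrac12 J_i\Delta t$. Fix any $\Delta t>0$. From $J_1>J_2$ and $\Delta t>0$, multiplying by $-\tfrac12\Delta t<0$ reverses the inequality:
\[
-\tfrac12 J_1\Delta t<-\tfrac12 J_2\Delta t .
\]
Since $x\mapsto e^{x}$ is strictly increasing on $\mathbb{R}$, this gives the displayed inequality $\exp(-\tfrac12J_1\Delta t)<\exp(-\tfrac12J_2\Delta t)$. The qualifier "for sufficiently small $\Delta t$" is not needed for this step; it holds for every positive $\Delta t$.

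For the "consequently" part, I would identify $\exp(-\tfrac12 J_i\Delta t)$ with the leading part of the Chernoff bound \eqref{eq:final_probability_bound} for System~$i$. This uses the definition \eqref{eq:risk_metric}, which gives $\alpha_i^2/(2\beta_i)=\tfrac12J_i$. That bound was derived under $\alpha_i\le0$, with the optimal parameter $\zeta^\star=-\alpha_i/\beta_i\ge0$ admissible and $\beta_i>0$. So I would state these as standing hypotheses for both systems, inherited from Section~\ref{sec:probabilistic}.

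The main obstacle is the $o(\Delta t)$ remainders $r_i(\Delta t)$ in \eqref{eq:final_probability_bound}. The full bounds are $\exp(-\tfrac12J_i\Delta t+r_i(\Delta t))$, so strict ordering requires
\[
r_1(\Delta t)-r_2(\Delta t)<\tfrac12(J_1-J_2)\Delta t .
\]
Here I would use the small-$\Delta t$ hypothesis. Because $r_1-r_2=o(\Delta t)$ and $\tfrac12(J_1-J_2)>0$, there is a $\Delta t_0>0$ with $|r_1(\Delta t)-r_2(\Delta t)|<\tfrac12(J_1-J_2)\Delta t$ for all $0<\Delta t<\Delta t_0$. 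Monotonicity of $\exp$ then gives the strict ordering of the complete bounds on that range. Equivalently, one can read the claim as a comparison of the leading-order exponents, which the first step already settles. In either reading, System~1 has the strictly smaller Chernoff upper bound on transient amplification.
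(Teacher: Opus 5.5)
Your proof is correct and takes the same route as the paper, which just invokes the strict monotonicity of the exponential on the exponents $-\tfrac12 J_i\Delta t$. Your treatment of the $o(\Delta t)$ remainders (the only place where ``sufficiently small $\Delta t$'' is actually needed) and your standing hypotheses $\alpha_i\le 0$, $\beta_i>0$ make explicit what the paper's one-line proof leaves implicit.
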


\begin{proof}
The result follows immediately from
\eqref{eq:final_probability_bound}, since the
exponential function is strictly increasing and
$J_1>J_2$ implies
\[
\exp\!\left(-\tfrac12J_1\Delta t\right)
<
\exp\!\left(-\tfrac12J_2\Delta t\right).
\]
\end{proof}
Proposition~\ref{prop:risk_ordering}
shows that larger $J_{\rm risk}$
correspond to smaller theoretical upper bounds on
finite-time transient amplification, making
$J_{\rm risk}$ a natural design objective.
%----------------------------------------------------------
\subsection{Design Optimization}

The transient-risk index provides a natural objective
for finite-time robust design by explicitly balancing
deterministic contraction and diffusion-induced
variability,
\[
\max J_{\rm risk} = \max\frac{\alpha^2}{\beta},
\]
subject to mission or system constraints.
Maximizing $J_{\mathrm{risk}}$ minimizes the theoretical
upper bound on finite-time transient amplification
while balancing deterministic contraction and
diffusion-induced variability.

Optimization variables may include guidance and
control gains, navigation filter parameters,
trajectory design variables, sensor configurations,
projection operators, and data-assimilation
strategies. The transient-risk index therefore extends the
framework from transient analysis to
transient-risk-aware design.

%-----------------------------------------------------
\subsection{Extension to Projected Systems}

The projected transient stability theory of
Section~\ref{sec:projected} extends the transient-risk formulation to continuously corrected stochastic systems. Replacing the unconstrained parameters $\alpha$ and $\beta$ by their projected counterparts, \( \alpha_{\rm proj}, \beta_{\rm proj}, \) yields the projected transient-risk index
\[
J_{\rm proj} =
\frac{\alpha_{\rm proj}^{2}}
{\beta_{\rm proj}},
\]
which quantifies finite-time transient robustness
under continuous measurement updates and projection
operations. This provides a principled basis for designing
projection operators and data-assimilation
strategies that balance estimation performance and
transient robustness.

More generally, the proposed framework applies to
nonlinear stochastic systems satisfying the
assumptions of
Sections~\ref{sec:preliminaries}--
\ref{sec:projected}, independent of the application
domain. Representative applications include spacecraft
guidance, navigation, and control, powered-descent
guidance, planetary landing, nonlinear state
estimation, autonomous rendezvous and docking,
formation flying, and other safety-critical
autonomous aerospace systems.

The transient-risk index also provides a quantitative
criterion for autonomous guidance and decision
making. When multiple feasible actions are available, candidate strategies can be ranked according to their
transient-risk indices together with conventional
performance measures such as fuel consumption,
flight time, landing accuracy, mission safety. This enables transient-risk-aware guidance and trajectory selection by favoring actions with lower theoretical probabilities of finite-time transient
amplification while satisfying conventional mission
performance objectives.

\section{Discussion and Aerospace Design Implications}
\label{sec:discussion}

The proposed finite-amplitude transient stability
framework extends classical stochastic stability
analysis from asymptotic and infinitesimal
perturbation behavior to finite-time transient
amplification in nonlinear stochastic systems.
The theoretical developments together with the
numerical and telemetry validation provide several
insights for aerospace guidance, navigation, and
control systems operating under stochastic
uncertainty.

%-----------------------------------------------------
\subsection{Implications for Aerospace Guidance and Navigation}

Mission-critical aerospace guidance, navigation,
and control operations, including atmospheric entry,
powered descent, planetary landing, rendezvous, and
autonomous docking, require reliable operation over
finite time horizons in the presence of uncertainty.
Although conventional stability analyses ensure
asymptotic or local convergence, significant
transient perturbation growth may still occur before
asymptotic behavior dominates. Such transient
amplification can degrade navigation accuracy,
increase guidance errors, and reduce mission
robustness during safety-critical operations.

Modern aerospace navigation systems continuously
fuse measurements from inertial sensors, vision
systems, terrain-relative navigation, GNSS (where
available), and other onboard sensors. These
measurement updates modify both deterministic
contraction properties and diffusion-induced
variability. Consequently, projection and
data-assimilation strategies directly influence
finite-time transient robustness.

The proposed framework provides a quantitative
characterization of short-horizon transient behavior
through finite-amplitude perturbation evolution,
thereby complementing classical Lyapunov- and
Jacobian-based analyses with information that is
directly relevant to operational decision making.

%-----------------------------------------------------
\subsection{Engineering Design Implications}

The analytical results naturally suggest several
practical design principles for stochastic
autonomous aerospace systems.

\begin{itemize}

\item maximize deterministic contraction by reducing
the transient-growth parameter $\alpha$;

\item reduce diffusion-induced variability by
minimizing the variance parameter $\beta$ through
improved sensing, estimation, and data-assimilation
strategies;

\item design projection operators that improve
measurement consistency while limiting transient
amplification;

\item monitor empirical transient-growth and
diffusion-variability indicators online to support
transient-risk-aware supervision and fault
detection.

\end{itemize}

These principles provide a quantitative basis for
designing guidance, navigation, estimation, and
control algorithms that explicitly account for
finite-time robustness in addition to conventional
asymptotic stability.

%-----------------------------------------------------
\subsection{Relation to Existing Stability Frameworks}

The proposed methodology complements existing
Lyapunov, incremental stability, and contraction
theories by explicitly characterizing
finite-amplitude transient amplification over finite
operational horizons.

Whereas classical approaches primarily quantify
asymptotic convergence or infinitesimal perturbation
behavior, the present framework directly evaluates
finite-amplitude perturbation evolution, providing
additional insight into short-term robustness under
stochastic uncertainty.

Rather than replacing existing stability tools, the
proposed methodology provides an additional
analytical tool for evaluating transient robustness
in nonlinear stochastic systems.

%-----------------------------------------------------
\subsection{Illustrative Extension to Autonomous Spacecraft Guidance}

Consider an autonomous spacecraft performing powered
descent or planetary landing. Let $ \Gamma=\{\Gamma_1,\Gamma_2,\ldots,\Gamma_N\}$
denote the set of feasible guidance trajectories
generated by an onboard guidance algorithm.

The stochastic dynamics associated with each
candidate trajectory are represented by the
nonlinear It\^o model introduced in
Section~\ref{sec:preliminaries}, where the drift and
diffusion fields depend on spacecraft dynamics,
terrain uncertainty, sensing geometry, and
environmental disturbances.

Applying the proposed framework yields the
transient-growth parameter
$\alpha(\Gamma_i)$,
the diffusion-variability parameter
$\beta(\Gamma_i)$,
and the corresponding transient-risk index

\[
J_{\rm risk}(\Gamma_i)
=
\frac{\alpha(\Gamma_i)^2}
{\beta(\Gamma_i)}.
\]

Candidate trajectories can therefore be ranked using
their transient-risk indices together with
conventional mission objectives such as fuel
consumption, flight time, and landing accuracy.
Consequently, the proposed framework provides a
quantitative basis for transient-risk-aware guidance
and trajectory selection in future autonomous
spacecraft missions.
%=====================================================

%=========================================================
%=========================================================
\section{Monte-Carlo Validation of Finite-Amplitude Transient-Growth Prediction}
\label{sec:mc_validation}
%=========================================================
This section validates the proposed finite-amplitude
logarithmic measure using Monte Carlo
realizations by comparing its transient-growth
prediction with the classical Jacobian logarithmic
measure. The benchmark nonlinear system is
\begin{equation}
\begin{aligned}
\dot{x}_1 &= -x_1 + 2x_2 - x_1x_2, \\
\dot{x}_2
&= -2x_2 + x_1^3 - x_1^2x_2 ,
\end{aligned}
\label{eq:mc_system}
\end{equation}

which exhibits strong nonlinear transient
amplification and therefore provides a challenging
benchmark.
For each realization, the initial condition was sampled
uniformly from
$(x_1(0),x_2(0))\in[-1,1]^2$
with finite perturbation
$\delta x_0=10^{-2}[1\;0]^\top$.
The nominal and perturbed trajectories were propagated
over $t\in[0,20]$ using MATLAB \texttt{ode45}
(RelTol $10^{-10}$, AbsTol $10^{-12}$)
with a fixed random seed.

For each realization, the true logarithmic perturbation
growth rate was computed as

\begin{equation}
\dot G(t) =
\frac{d}{dt}
\ln \left(
\frac{\|\delta x(t)\|}
{\|\delta x_0\|}
\right),
\label{eq:true_growth_mc}
\end{equation}
where $\delta x(t)=x_p(t)-x_n(t)$.

The true growth rate was compared with
\begin{enumerate}
\item the classical Jacobian logarithmic norm $\mu_{\rm Jac}(t)=\mu_2(J(x(t)))$;

\item the proposed finite-amplitude logarithmic
measure $\mu_{\rm FA}(t)$.
\end{enumerate}

Prediction accuracy was quantified using the root mean
square error
\begin{equation}
{\rm RMSE}
=
\sqrt{
\frac{1}{N}
\sum_{k=1}^{N}
\big(
\dot G_k-\mu_k
\big)^2
},
\label{eq:rmse_metric}
\end{equation}
and the Pearson correlation coefficient between the
predicted and true growth rates.

Statistical significance was assessed using a paired
Wilcoxon signed-rank test.

%=========================================================
\subsection{Monte Carlo Statistical Validation}
%=========================================================
Across all Monte Carlo realizations, the proposed finite-amplitude logarithmic measure achieved lower prediction error than the classical Jacobian approximation in every realization. A paired Wilcoxon signed-rank test rejected equal predictive performance ($p = 3.3 \times 10^{-165}$), confirming that the observed improvement is statistically significant. These results demonstrate that directly evaluating finite-amplitude perturbation evolution provides substantially more accurate prediction of transient growth than classical Jacobian linearization, particularly when nonlinear interactions dominate the system dynamics.

\begin{figure}[t]
\centering
\includegraphics[width=0.70\linewidth]
{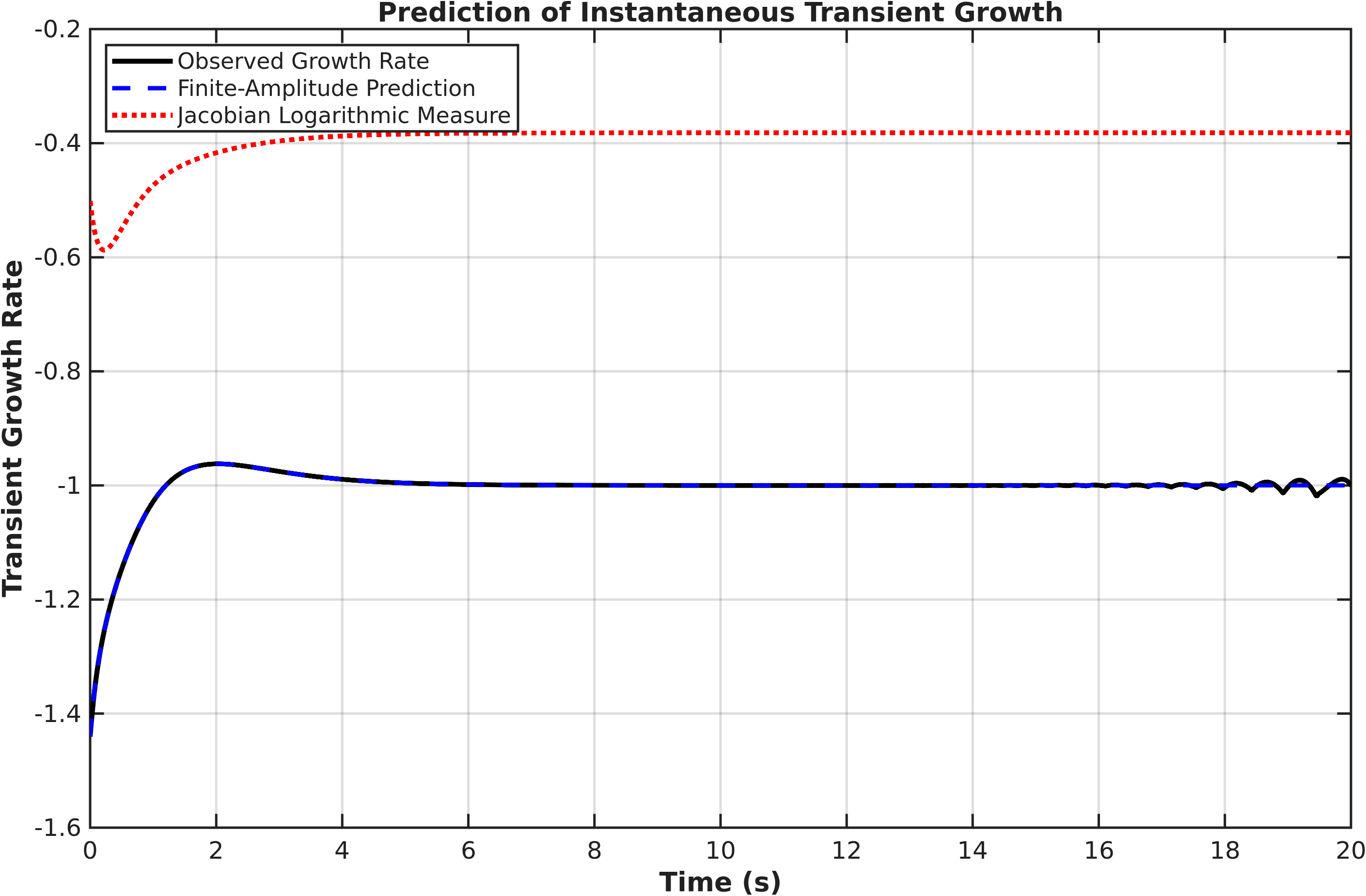}
\caption{
Comparison of the observed transient-growth rate with
predictions obtained using the proposed finite-amplitude
and classical Jacobian logarithmic measures for a
representative Monte Carlo realization.
}
\label{fig:mc_prediction}
\end{figure}

Figure~\ref{fig:mc_prediction}
shows that the proposed measure closely tracks the
observed transient-growth rate, whereas the Jacobian
approximation exhibits systematic prediction bias.

\begin{figure}[t]
\centering
\includegraphics[width=0.70\linewidth]
{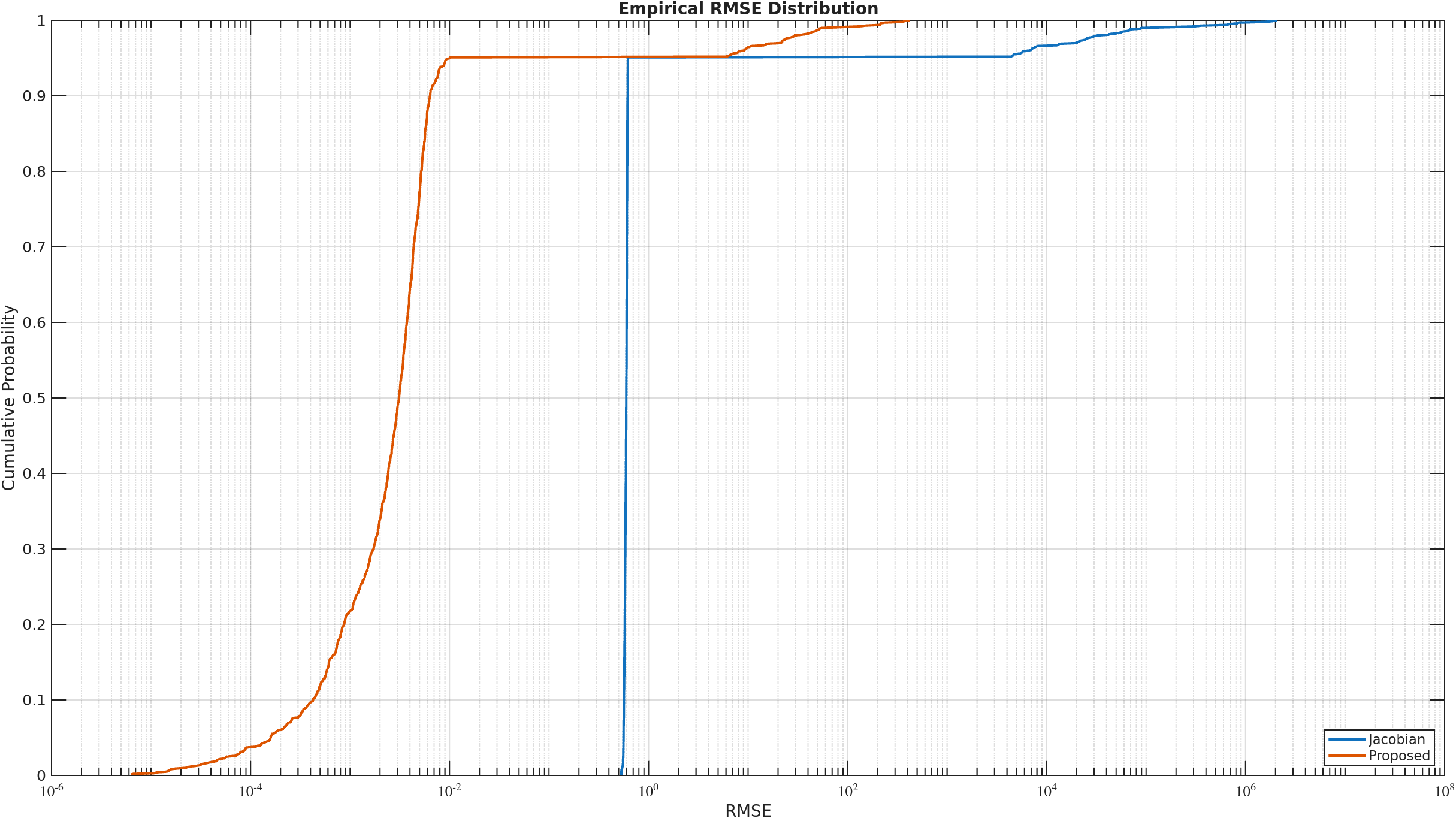}
\caption{
Distribution of prediction RMSE over Monte Carlo
realizations. The proposed finite-amplitude logarithmic
measure maintains consistently lower prediction error
than the classical Jacobian logarithmic measure.
}
\label{fig:mc_rmse}
\end{figure}

\begin{figure}[t]
\centering
\includegraphics[width=0.70\linewidth]
{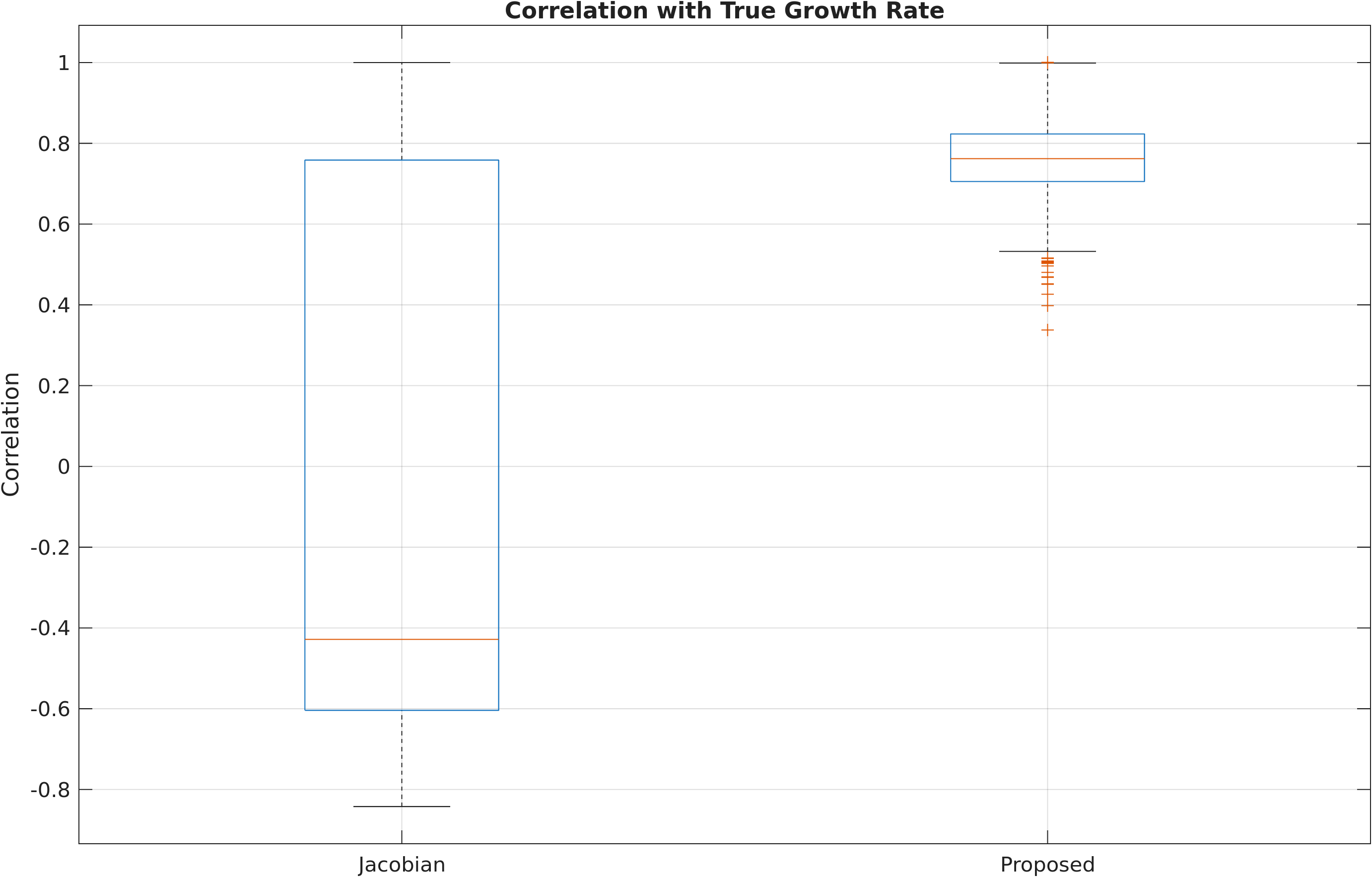}
\caption{
Correlation between predicted and observed
transient-growth rates over Monte Carlo
realizations for the proposed finite-amplitude and
classical Jacobian logarithmic measures.
}
\label{fig:mc_corr}
\end{figure}

\begin{figure}[t]
\centering
\includegraphics[width=0.70\linewidth]
{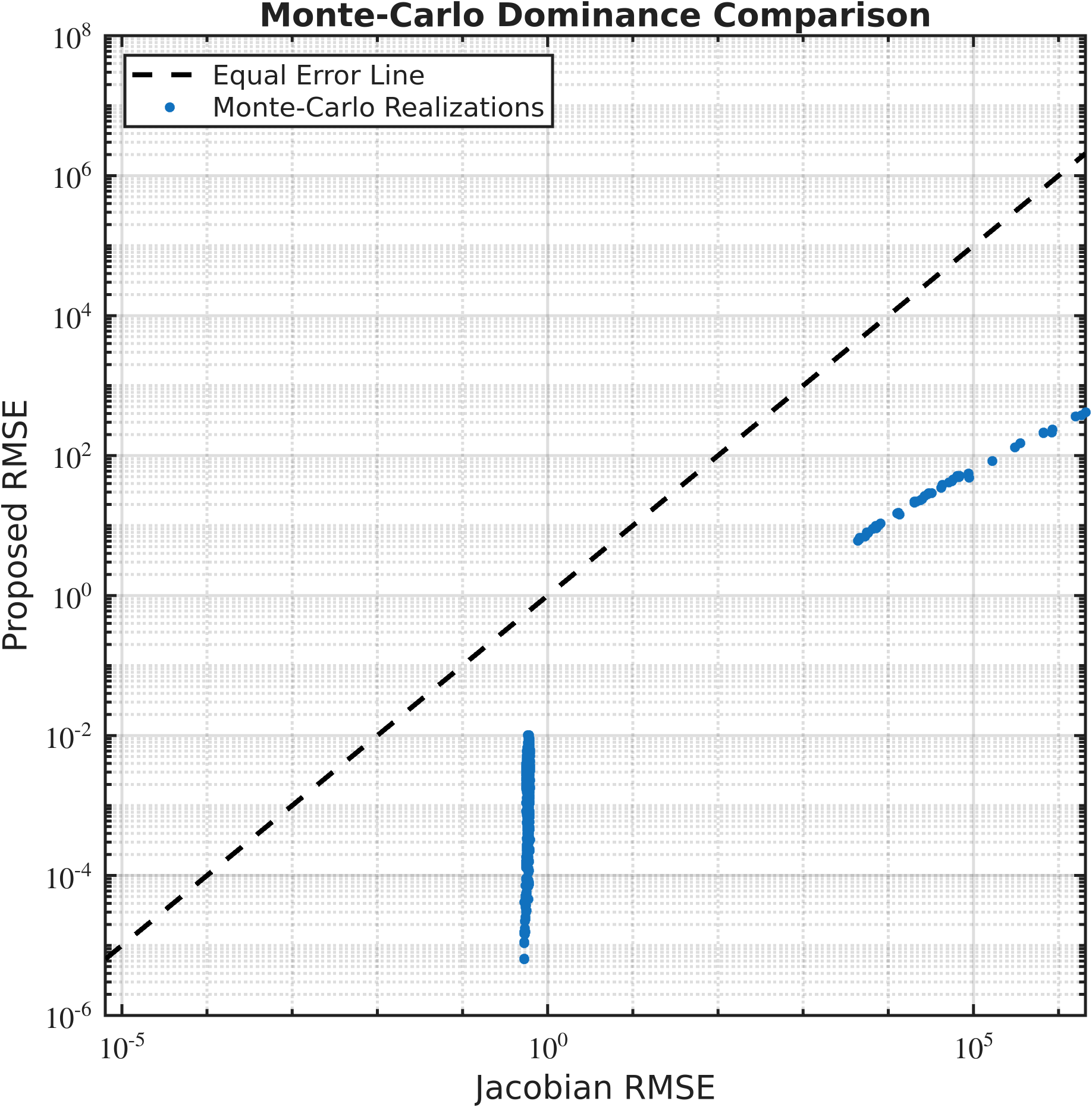}
\caption{
Monte Carlo comparison of prediction RMSE obtained
using the classical Jacobian logarithmic measure and
the proposed finite-amplitude logarithmic measure.
Each point represents one realization.
}
\label{fig:mc_dom}
\end{figure}

Collectively,
Figs.~\ref{fig:mc_rmse}--\ref{fig:mc_dom}
show that the proposed method consistently achieves
lower prediction error, stronger correlation with the
observed transient-growth rate, and uniformly
superior predictive performance across the Monte
Carlo ensemble.

\subsection{Summary of Quantitative Performance}
\label{sec:quantitative_validation}

Table~\ref{tab:mc_results}
summarizes the quantitative performance of the
proposed and classical logarithmic measures over Monte Carlo realizations.

\begin{table}[t]
\caption{
Quantitative comparison of transient-growth prediction
performance over 1000 Monte Carlo realizations.
}
\label{tab:mc_results}
\centering
\renewcommand{\arraystretch}{1.15}
\begin{tabular}{lcc}
\toprule
\textbf{Metric} & \textbf{Jacobian} & \textbf{Proposed} \\
\midrule
Median RMSE                  & 0.5977              & 0.00310 \\
Mean RMSE                    & $1.04\times10^{4}$  & 3.292 \\
Mean MAE                     & 234.518             & 0.0791 \\
Mean Maximum Error           & $4.76\times10^{5}$  & 153.635 \\
Median Spearman Correlation  & $-0.428$            & 0.762 \\
Mean Spearman Correlation    & 0.032               & 0.769 \\
Mean Pearson Correlation     & 0.424               & 0.990 \\
\midrule
Mean RMSE Reduction          & \multicolumn{2}{c}{99.97\%} \\
Wilcoxon Signed-Rank $p$-value & \multicolumn{2}{c}{$3.33\times10^{-165}$} \\
\bottomrule
\end{tabular}
\end{table}

Across 1000 Monte Carlo realizations, the proposed finite-amplitude logarithmic measure reduced the mean RMSE by 99.97\% and significantly outperformed the classical Jacobian approximation ($p = 3.33 \times 10^{-165}$), providing strong numerical evidence supporting the proposed finite-amplitude transient-growth framework.
%-----------------------------------------------------
%-----------------------------------------------------

%-----------------------------------------------------
%-----------------------------------------------------

\begin{figure}[t]
	\centering
	\includegraphics[width=0.70\linewidth]{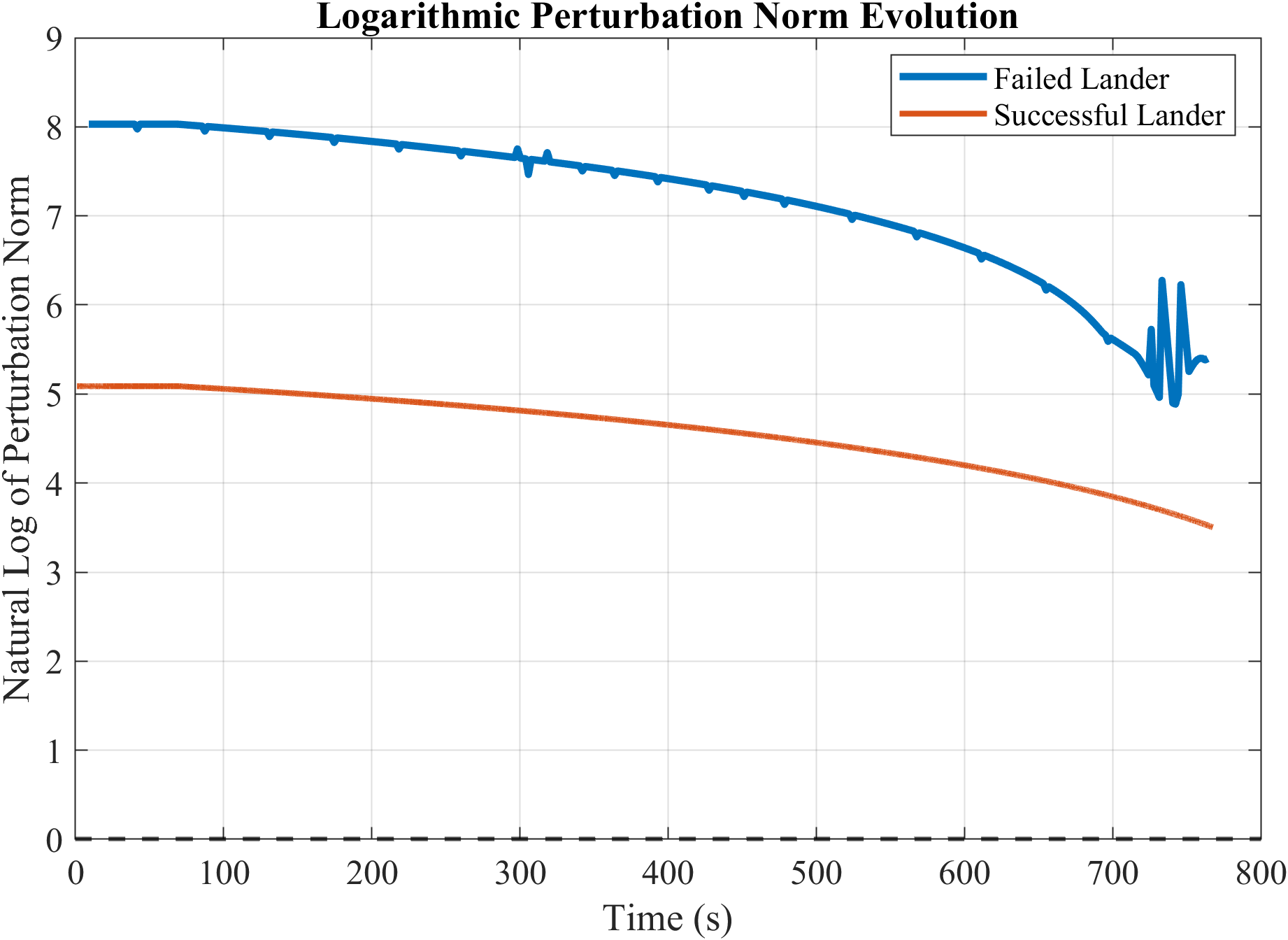}
	\caption{ Logarithmic perturbation norm
$\ln\|\delta X(t)\|_2$ during successful and failed lunar descent.}
	\label{fig:perturb}
\end{figure}

%-----------------------------------------------------
\begin{figure}[t]
	\centering
	\includegraphics[width=0.70\linewidth]{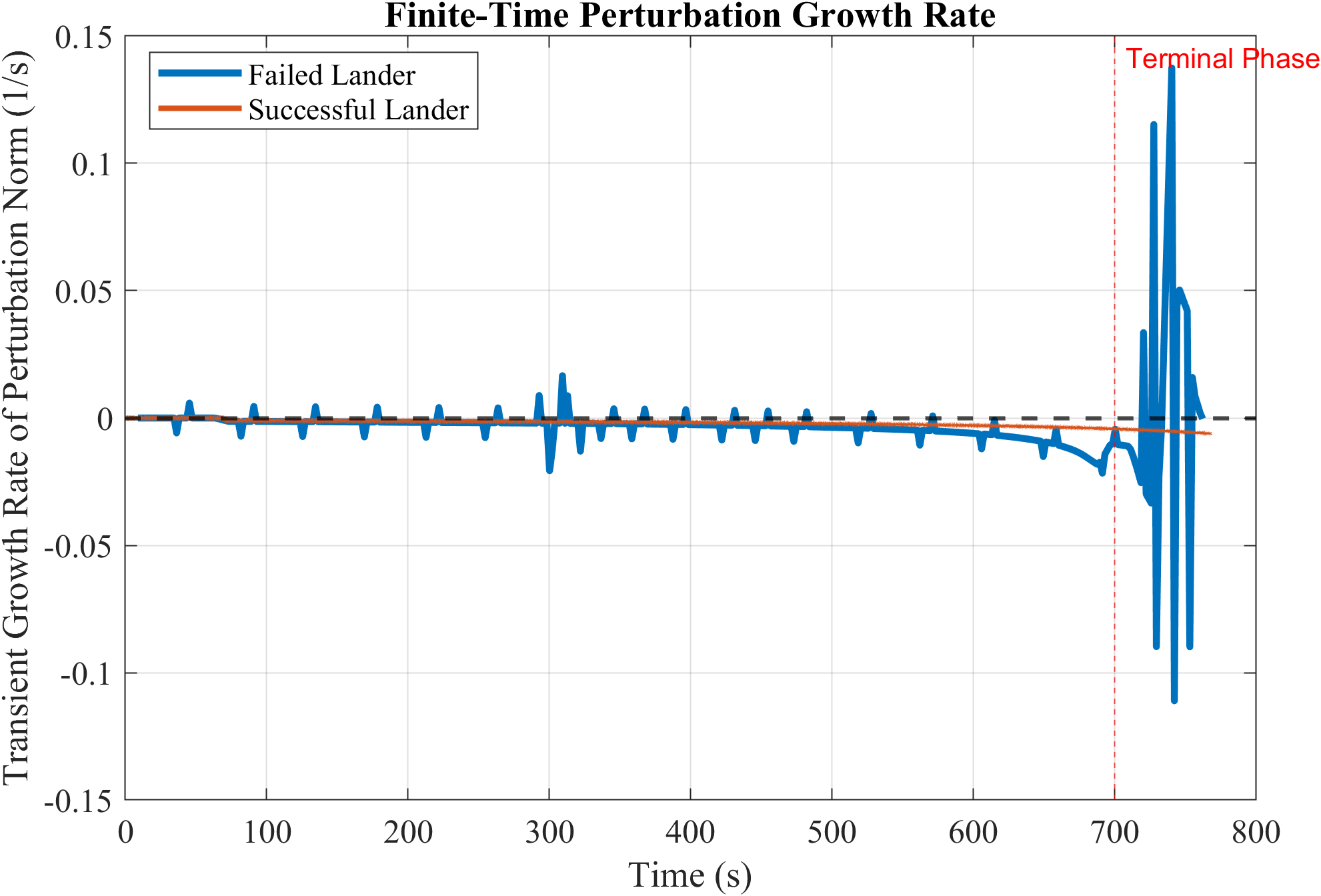}
	\caption{ Empirical transient-growth indicator
$\alpha(t)$ during successful and failed lunar descent. }
	\label{fig:alpha}
\end{figure}

%-----------------------------------------------------
\begin{figure}[t]
	\centering
	\includegraphics[width=0.70\linewidth]{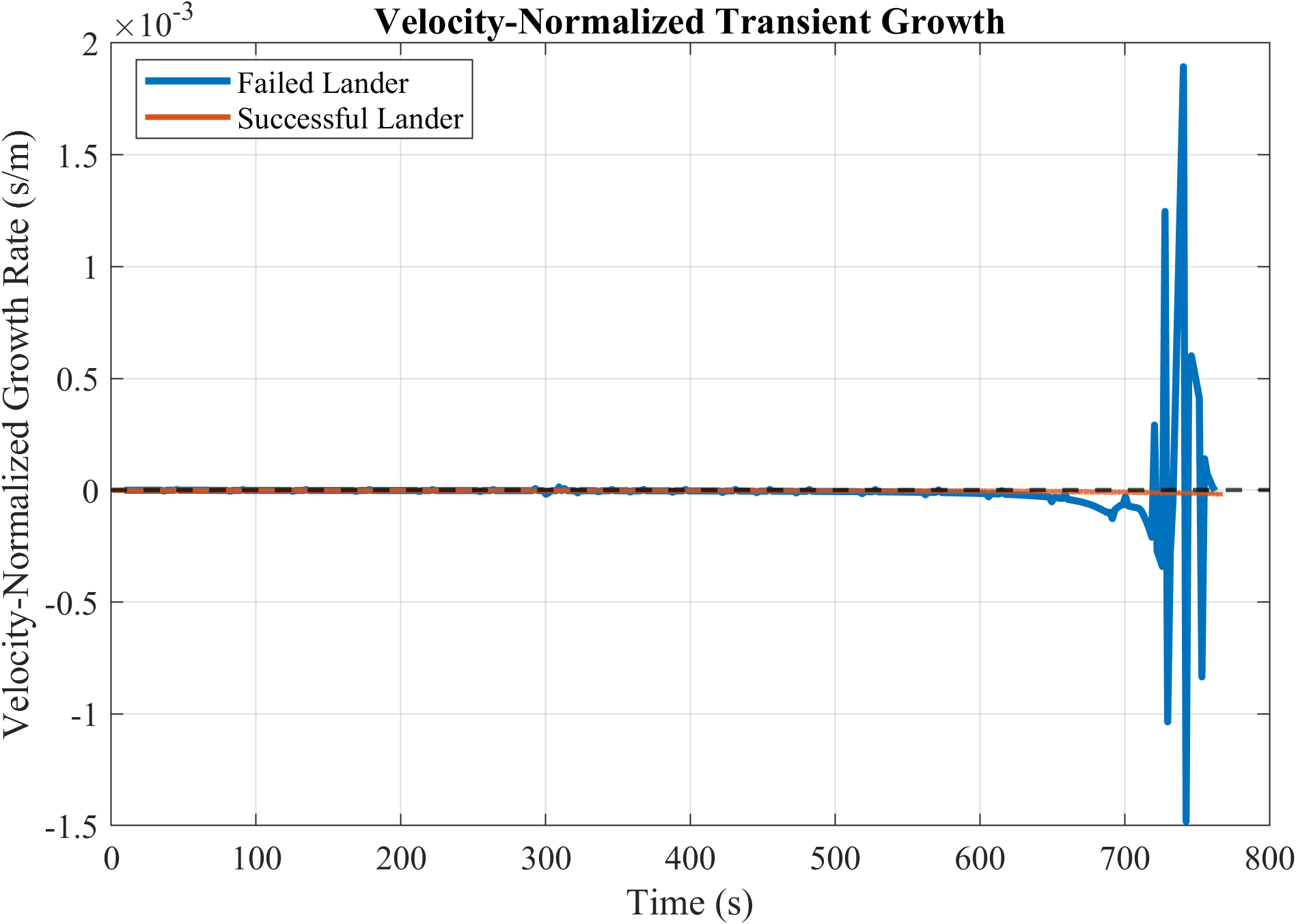}
	\caption{ Velocity-normalized empirical transient-growth indicator during successful and failed lunar descent. }
	\label{fig:alpha_norm}
\end{figure}

%-----------------------------------------------------
\begin{figure}[t]
	\centering
	\includegraphics[width=0.70\linewidth]{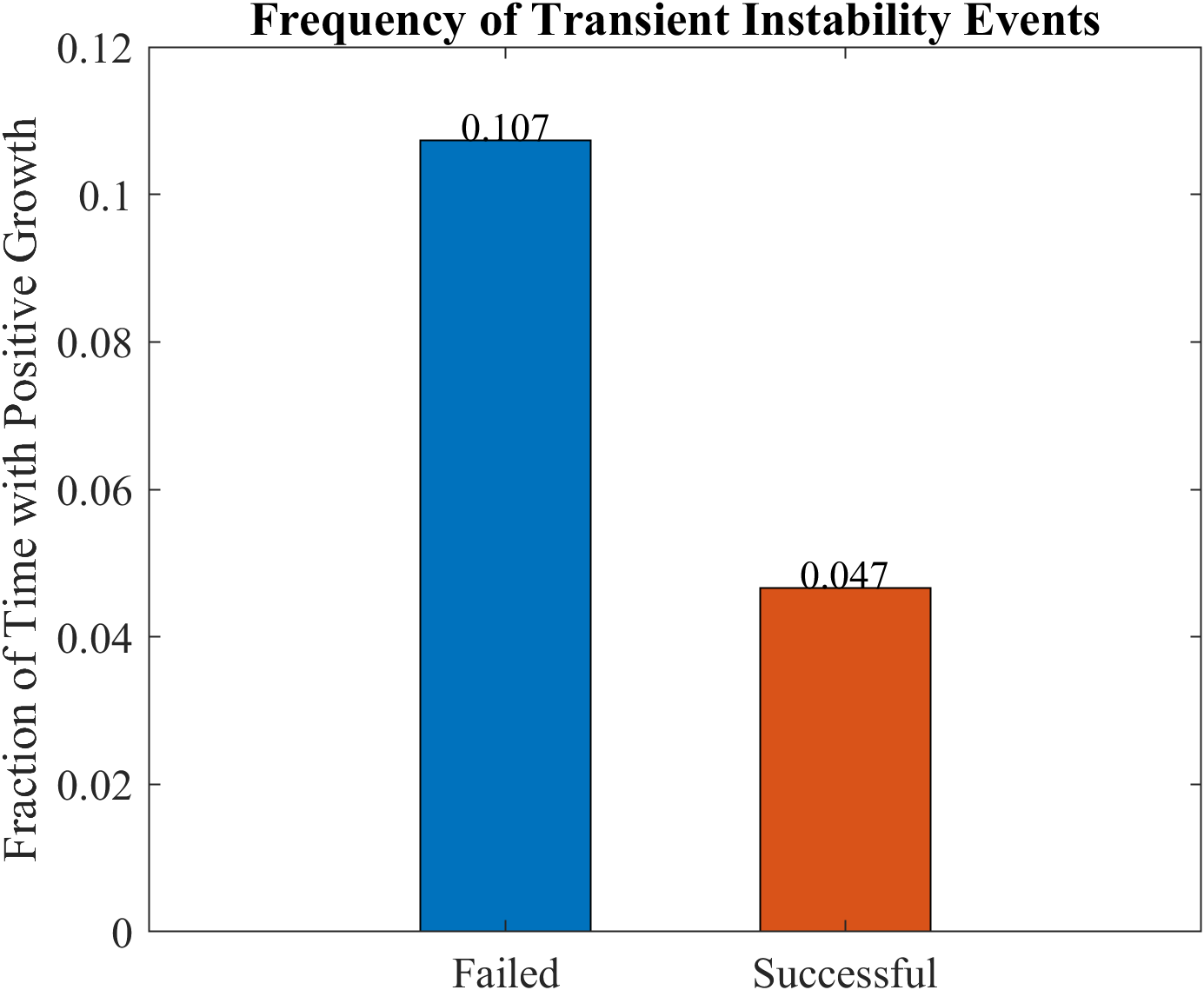}
	\caption{
Frequency of transient instability events,
defined as the fraction of time for which
$\alpha(t)>0$.
}
	\label{fig:freq}
\end{figure}

%-----------------------------------------------------
\begin{figure}[t]
	\centering
	\includegraphics[width=0.70\linewidth]{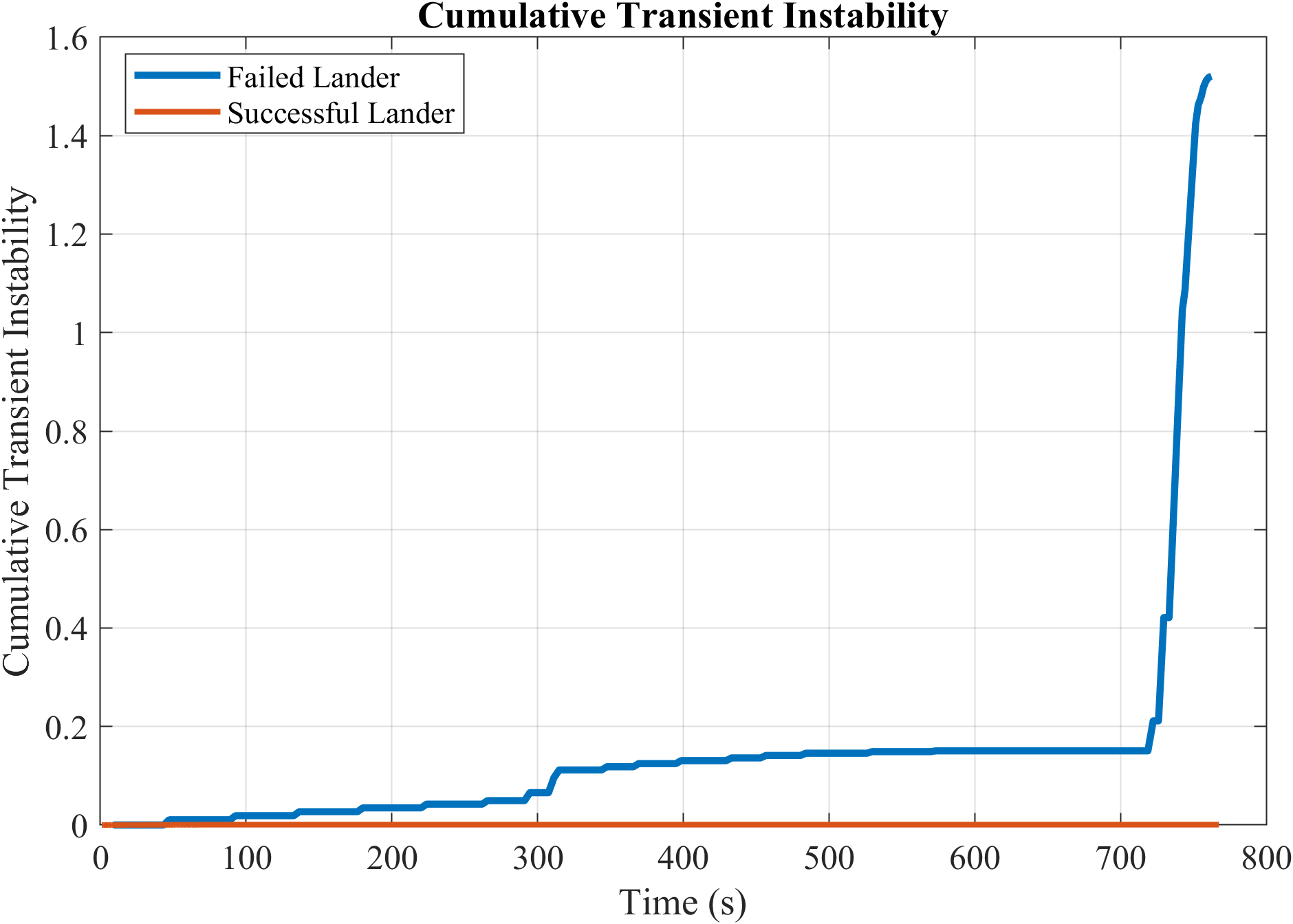}
	\caption{
Cumulative transient instability computed from the
empirical transient-growth indicator.
}
	\label{fig:cumulative}
\end{figure}
%-----------------------------------------------------
\begin{figure}[t]
	\centering
	\includegraphics[width=0.70\linewidth]{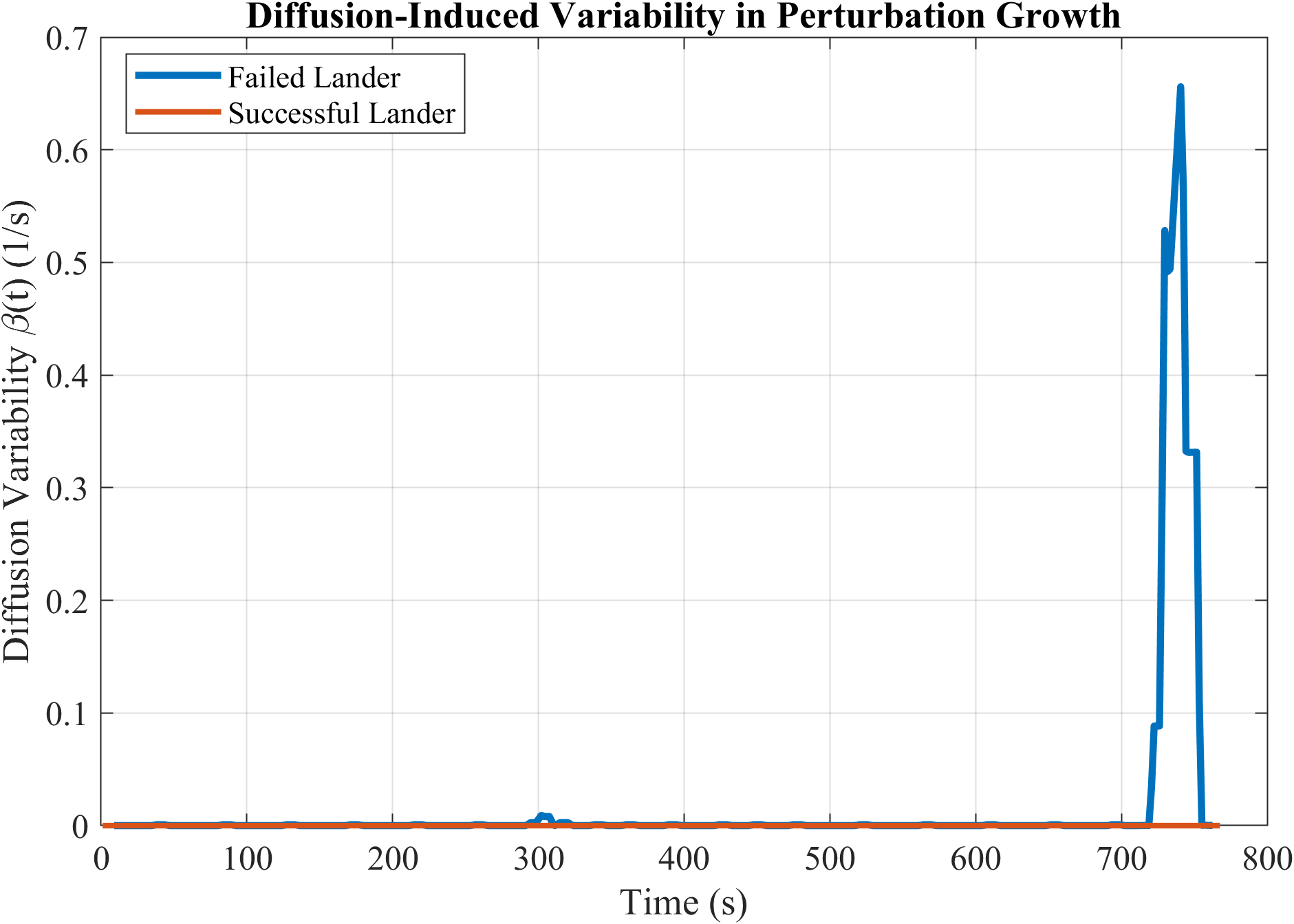}
	\caption{
Estimated diffusion-variability indicator
$\beta(t)$
during successful and failed lunar descent.
}
	\label{fig:beta}
\end{figure}
%-----------------------------------------------------
%-----------------------------------------------------
\section{Experimental Validation Using Flight-Like Lunar Descent Telemetry}
\label{sec:numerical}
This section experimentally evaluates the proposed
finite-amplitude transient stability framework using
flight-like lunar descent telemetry. The objective is
to determine whether the telemetry-derived transient
indicators consistently distinguish successful and
failed landing trajectories and provide experimental
support for the theoretical predictions developed in
Sections~\ref{sec:lognorm}--\ref{sec:projected}.
%-----------------------------------------------------
\subsection{Flight Telemetry and Validation Objective}

To establish empirical counterparts of the theoretical
quantities developed in
Sections~\ref{sec:lognorm}--\ref{sec:projected},
the following telemetry-derived observables are
constructed directly from the recorded descent
trajectories.

\begin{itemize}
\item Logarithmic perturbation norm \( \ln\|\delta X(t)\|_2, \) which characterizes the evolution of finite-amplitude perturbations.

\item Empirical transient-growth indicator
\[
\alpha(t)
\approx
\frac{
\ln\|\delta X(t+\Delta t)\|_2
-
\ln\|\delta X(t)\|_2
}
{\Delta t},
\]
which estimates the instantaneous logarithmic
perturbation growth rate.

\item Velocity-normalized transient-growth indicator,
used to distinguish intrinsic transient instability
from variations caused by changing descent velocity.

\item Diffusion-variability indicator
$\beta(t)$,
computed as a local estimate of the variance of
logarithmic perturbation increments over a moving
time window. This observable provides a
data-driven estimate of diffusion-induced variability
consistent with the variance term appearing in the
It\^o-based transient stability formulation.

\item Frequency of transient instability events,
defined by the fraction of time for which \( \alpha(t)>0. \)

\item Cumulative transient instability,

\[
\int_{0}^{t}
\max\!\left(\alpha(\tau),0\right)\,d\tau,
\]

which quantifies the accumulated exposure to
finite-time transient amplification.

\end{itemize}

\begin{remark}
The quantities $\alpha(t)$ and $\beta(t)$ used in this
section are empirical estimates of the instantaneous
transient-growth rate and diffusion-induced
variability, respectively. They serve as observable
counterparts of the theoretical coefficients
$\alpha$ and $\beta$ introduced in
Sections~\ref{sec:transient}--\ref{sec:probabilistic},
but are not identical to the analytical bounds defined
in \eqref{eq:alpha} and \eqref{eq:beta}.
\end{remark}
%-----------------------------------------------------
%-----------------------------------------------------
\subsection{Perturbation Growth and Transient Indicators}

Figures~\ref{fig:perturb} and~\ref{fig:alpha}
collectively illustrate the relationship between
finite perturbation evolution and the corresponding
empirical transient-growth indicator.

The logarithmic perturbation norm shown in
Fig.~\ref{fig:perturb} remains well behaved throughout
the successful descent, whereas the failed trajectory
exhibits localized perturbation amplification during
the terminal descent phase. This amplification is
accompanied by pronounced positive excursions of the
empirical transient-growth indicator $\alpha(t)$
(Fig.~\ref{fig:alpha}) over
$t\approx700$--$740~\mathrm{s}$,
indicating intervals of finite-time transient
instability.

The coincidence of perturbation amplification and
positive values of the empirical transient-growth
indicator is consistent with the theoretical prediction
that finite-time transient instability is associated
with localized perturbation growth.
%-----------------------------------------------------
%-----------------------------------------------------
\subsection{Diffusion Variability and Intrinsic Instability}
Figures~\ref{fig:alpha_norm} and ~\ref{fig:beta} together show that the transient instability observed during the failed descent is an intrinsic property of the underlying dynamics rather than a consequence of nominal trajectory variations.

The diffusion-variability indicator $\beta(t)$
(Fig.~\ref{fig:beta}) remains negligible throughout
the successful descent but increases markedly during
the terminal phase of the failed trajectory,
consistent with the predicted increase in
diffusion-induced variability accompanying
finite-time transient instability. A smaller rise
around $t\approx300~\mathrm{s}$ coincides with a
documented guidance--engine interaction anomaly,
indicating that the proposed observable is sensitive
to localized dynamical inconsistencies before mission
failure.

Figure~\ref{fig:alpha_norm} further shows that the
terminal excursions of the empirical transient-growth
indicator persist after normalization by descent
velocity. Thus, the observed transient amplification
cannot be explained solely by nominal descent-rate
variations but reflects the underlying nonlinear
dynamics. Collectively, these observations indicate that finite-time transient instability is accompanied by simultaneous increases in empirical transient-growth
rate and diffusion-induced variability.
%-----------------------------------------------------
\subsection{Persistence and Accumulation of Transient Instability}

Figures~\ref{fig:freq} and~\ref{fig:cumulative}
characterize the temporal persistence of transient
instability during descent through its occurrence
frequency and cumulative accumulation.

Figure~\ref{fig:freq} shows that transient instability
occurs more than twice as frequently in the failed
trajectory as in the successful trajectory, whereas
Fig.~\ref{fig:cumulative} shows that these events
accumulate progressively during terminal descent,
resulting in substantially greater cumulative
instability exposure.

Collectively, these results indicate that mission
outcome is more strongly associated with the
persistence and cumulative accumulation of transient
instability than with isolated transient-growth
events.
%-----------------------------------------------------
\subsection{Quantitative Validation and Engineering Implications}
\label{sec:lunar_metrics}

Table~\ref{tab:lunar_metrics} summarizes the principal
transient-stability metrics extracted from the
flight-like lunar descent telemetry.

\begin{table}[t]
\caption{Transient-stability metrics computed from flight-like lunar descent telemetry. }
\label{tab:lunar_metrics}
\centering
\begin{tabular}{lcc}
\toprule
Metric & Failed & Successful \\
\midrule
Peak Growth Rate $\alpha_{\max}$ & 0.1374 & $7.4\times10^{-5}$ \\
Mean Growth Rate $\bar{\alpha}$ & -0.00349 & -0.00207 \\
Instability Frequency & 0.1073 & 0.0466 \\
Cumulative Instability & 1.5203 & 0.00068 \\
Peak Variability $\beta_{\max}$ & 0.6559 & $<10^{-6}$ \\
\bottomrule
\end{tabular}
\end{table}

The telemetry-derived transient-stability metrics consistently distinguish the successful and failed descent trajectories. Compared with the successful trajectory, the failed trajectory exhibits substantially larger transient-growth rates,
diffusion-induced variability, instability frequency,
and cumulative instability, indicating sustained
finite-time perturbation amplification rather than
isolated transient events.

Collectively, these results are consistent with the
theoretical predictions developed throughout the
paper and demonstrate that the proposed
transient-stability metrics provide information
beyond conventional trajectory-tracking and
long-horizon stability measures. The derived
indicators therefore provide a quantitative basis for
assessing navigation robustness in autonomous
aerospace systems.

Although demonstrated using flight-like lunar descent
telemetry, the proposed methodology is applicable to
a broad class of continuously differentiable
nonlinear stochastic systems, including spacecraft
guidance, navigation, and control, autonomous
robotics, state estimation, autonomous vehicles, and
other safety-critical engineering applications.
%=====================================================
%=====================================================

%=====================================================
%=====================================================
%=====================================================
\section{Conclusion}
\label{sec:conclusion}

This paper developed a finite-amplitude logarithmic
measure framework for finite-time transient stability
analysis of nonlinear It\^o stochastic differential
equations. By extending classical logarithmic
measures from infinitesimal Jacobian dynamics to
finite-amplitude perturbation evolution, the proposed
framework directly characterizes nonlinear transient
growth on stochastic flows without requiring explicit
variational equations or Lyapunov-function
construction.

The resulting theory establishes deterministic and
probabilistic finite-time transient stability through
explicit bounds on the mean and variance of
logarithmic perturbation growth together with
Chernoff-type probabilistic bounds on transient
amplification. The extension to projected stochastic
dynamics further introduces a transient-risk index
that unifies deterministic contraction and
diffusion-induced variability, providing a
computationally tractable framework for
transient-risk-aware analysis and system design.

The proposed framework was validated through
1000 Monte Carlo simulations and flight-like lunar
descent telemetry. The numerical and telemetry
results demonstrate substantially improved prediction
of nonlinear transient growth compared with the
classical Jacobian logarithmic measure and show that
trajectories exhibiting similar nominal behavior may
possess markedly different finite-time transient-risk
characteristics.

Overall, the proposed framework establishes a unified
methodology for finite-time transient stability
analysis, probabilistic transient-risk assessment,
and transient-risk-aware design of nonlinear
stochastic systems. Beyond its application to lunar
descent telemetry, the framework is applicable to a
broad class of autonomous aerospace systems,
including spacecraft guidance, navigation, and
control, planetary landing, rendezvous and docking,
formation flying, and onboard state estimation under
stochastic uncertainty.

Future work will investigate transient-risk-aware
guidance and control, adaptive projection and
data-assimilation strategies, stochastic observer
design, and real-time onboard transient-risk
monitoring for next-generation autonomous aerospace
systems.
%=====================================================
%=====================================================

%%%%%%%%%%%%%%%%%%%%%%%%%%%%%%%%%%%%%%%%%%%%%%%%%%%%%%%%%%%%%%%%%%%%%%%%%%%%%%
%% References
%%%%%%%%%%%%%%%%%%%%%%%%%%%%%%%%%%%%%%%%%%%%%%%%%%%%%%%%%%%%%%%%%%%%%%%%%%%%%%

\bibliographystyle{elsarticle-num}

\bibliography{references}

%%%%%%%%%%%%%%%%%%%%%%%%%%%%%%%%%%%%%%%%%%%%%%%%%%%%%%%%%%%%%%%%%%%%%%%%%%%%%%

\end{document}